\documentclass[11pt,a4paper]{article}
\usepackage{amsbsy,amsfonts,amsgen,amsmath,amsopn,amstext,amsthm,
	cases,color,
	epsf,epsfig,ebezier,eepic,epstopdf,
	listings,
	multirow
}
\usepackage{graphicx}
\usepackage{subcaption}
\usepackage{pgf,tikz}
\usepackage[marginal]{footmisc}
\usepackage{enumitem}
\usepackage[titletoc]{appendix}
\usepackage{booktabs}
\usepackage{url}
\usepackage{mathtools}

\usepackage{pgfplots}
\usepackage{authblk}
\usepackage{amssymb}

\usepackage{empheq}

\usepackage{dsfont}
\pgfplotsset{compat=1.18}
\usepackage{mathrsfs}
\usepackage{wasysym}
\usetikzlibrary{arrows}
\usepackage{aligned-overset}

\usepackage{algorithm}
\usepackage{algpseudocode}

\usepackage{algcompatible}
 \usepackage[backref=page]{hyperref}
\allowdisplaybreaks[1]

\definecolor{uuuuuu}{rgb}{0.27,0.27,0.27}
\definecolor{sqsqsq}{rgb}{0.1255,0.1255,0.1255}

\newtheorem{definition}{Definition} [section]
\newtheorem{theorem}[definition]{Theorem}
\newtheorem{lemma}[definition]{Lemma}

\newtheorem{conjecture}[definition]{Conjecture}
\newtheorem{claim}[definition]{Claim}

\newtheorem{fact}[definition]{Fact}
\newenvironment{pf}{\noindent{\bf Proof}}

\newcommand{\uproduct}{\mathbin{\;{\rotatebox{90}{\textnormal{$\small\Bowtie$}}}}}

\newcommand{\etal}{\textit{et al}. }

\def\qed{\hfill \rule{4pt}{7pt}}

\begin{document}
\title{\bf\Large Triangles in graphs without the expansion of $4$-cycle}

\date{}
%%%%%%%%%%%%%%%%%%%%%%%%%%%%%%%%%%%%%%%%%%%%%%%%%%%%
% \author[1]{Xizhi Liu\thanks{Research was supported by ERC Advanced Grant 101020255. Email: \texttt{xizhi.liu.ac@gmail.com}}}
\author{Jialei Song
%\thanks{Research was supported by Science and Technology Commission of Shanghai Municipality (No. 22DZ2229014) and National Natural Science Foundation of China (No. 12331014, No. 12501481, No. 12271169).} 
\footnote{Email: \texttt{jlsong@math.ecnu.edu.cn or jialsong@foxmail.com.}}, Qi Wu \footnote{Email: \texttt{wuqimath@163.com.}}, Long-Tu Yuan\footnote{Email: \texttt{ltyuan@math.ecnu.edu.cn(Corresponding author).}}}

%%%%%%%%%%%%%%%%%%%%%%%%%%%%%%%%%%%%%%%%%%%%%%%%%%%%%
% \affil[1]{Mathematics Institute and DIMAP,
%             University of Warwick,
%             Coventry, CV4 7AL, UK}
\affil{
School of Mathematical Sciences,  Key Laboratory of MEA (Ministry of Education) \& Shanghai Key Laboratory of PMMP,  East China Normal University, Shanghai 200241, China
}
%%%%%%%%%%%%%%%%%%%%%%%%%%%%%%%%%%%%%%%%%%%%%%%%%%%
\maketitle
%\footnote{footnote}
%%%%%%%%%%%%%%%%%%%%%%%%%%%%%%%%%%%%%%%%%%%%%%%%%
\begin{abstract}
The expansion $F^{\triangle}$ of a graph $F$ is the graph obtained from $F$ by replacing each edge with a triangle.
Lv \etal proposed a conjecture on the maximum number of triangles in a graph without $P_k^{\triangle}$ or $C_k^{\triangle}$ for every $k \ge 4$.
Their conjecture was confirmed in previous work for $P_k^{\triangle}$ when $k \ge 4$ and $C_k^{\triangle}$ when $k \ge 5$.
In this note, we resolve the remaining case $C_4^{\triangle}$, demonstrating that this is the only counterexample to their conjecture.

% The $\triangle$-blowup $G^{\triangle}$ of $G$ is the graph obtained from $F$ by replacing each edge with a copy of $K_3$ such that no two different copies of $K_3$ use the same new vertex.
% For large $n$ and $k\ge 4$, Lv \etal%~\cite{LGHSTVZ24}
% proposed a conjecture on the generalized Tur\'{a}n numbers of $P_k^{\triangle}$ and $C_k^{\triangle}$.
% Almost all case, except $C_4^{\triangle}$, in this conjecture were confirmed by Liu and Song.
% In this paper, we give an answer to the generalized Tur\'{a}n numbers of $C_4^{\triangle}$, which negated the conjecture proposed by Lv \etal.

\medskip

\noindent\textbf{Keywords:}  generalized Tur\'{a}n problem, expansion, triangle, stability.

\noindent\textbf{ Mathematics Subject Classification}: 05C35.
\end{abstract}
%%%%%%%%%%%%%%%%%%%%%%%%%%%%%%%%%%%%%%%%%%%%%%%%%
\section{Introduction}\label{SEC:Introduction}
We identify a graph $G$ with its edge set and use $V(G)$ to denote its vertex set.
% Given a graph $G$, let $V(G)$ denote its vertex set and $E(G)$ its edge set.
For every  $k>0$, let $C_k$, $P_k$, $M_k$ denote the \textbf{cycle}, \textbf{path} and \textbf{matching} with $k$ edges, respectively.
For simplicity, we use the vertex sequence $v_1v_2\cdots v_kv_1$  to denote a $C_k$, where $V(C_k)=\{v_1,v_2,\cdots, v_k\}$ and $v_iv_{i+1}\in E(C_k)$ for $1\le i\le k$ under taking index modulo $k$.
Let $K_t$ represent the complete graph with $t$ vertices, and $K_{s,t}$ be the complete bipartite graph with parts of sizes $s$ and $t$, respectively.
The \textbf{expansion} $G^{\triangle}$ of $G$ is the graph obtained from $G$ by replacing each edge with a copy of $K_3$, ensuring that no two different copies of $K_3$ share the same new vertex (see Figure~\ref{fig:Path-Cycle-Expansion} for $C_4^{\triangle}$).
%
%Denote by $G\cup H$ the vertex-disjoint union of two graphs $G$ and $H$. Given two graphs $G_1$ and $G_2$, the \textbf{join} $G_1 \uproduct G_2$ is obtained by taking vertex-disjoint copies of $G_1$ and $G_2$ and adding all pairs that have nonempty intersection with both $V(G_1)$ and $V(G_2)$, \ie
Given two graphs $G_1$ and $G_2$, the \textbf{join} $G_1 \uproduct G_2$ is the graph with vertex set $V(G_1)\cup V(G_2)$ and edge set 
$\left\{uv \colon u\in V(G_1),\ v\in V(G_2)\right\} \cup E(G_1) \cup E(G_2).$
Let $T(n)$ be a balanced complete bipartite graph and let $X,Y$ be two parts of $T(n)$ with $|X|\le |Y|$.
We define $T^+(n)$ as the graph obtained from $T(n)$ by adding an edge into $X$.
For a positive integer $t$, define $S(n,t):=K_t\uproduct T(n-t)$ and $S^+(n,t):=K_t\uproduct T^+(n-t)$.

Let $Q, F$ be two graphs.
We denote the number of copies of $Q$ in $F$ as $N(Q, F)$.
The \textbf{generalized Tur\'{a}n number} $\mathrm{ex}(n,Q,F)$ is the maximum number of copies of $Q$ in an $n$-vertex $F$-free graph.
When $Q = K_2$, this corresponds to the ordinary Tur\'{a}n number of $F$.
In 1941, Tur\'{a}n~\cite{T41} finished a seminal work in extremal graph theory by showing the unique extremal graph for clique.
Comparable to the conventional Tur\'{a}n problem, Zykov~\cite{Zykov} firstly considered the generalized Tur\'{a}n problem, in which he determined $\mathrm{ex}(n,K_s, K_t)$ for all $n\ge t > s \ge 2$.
Later, Erd\H{o}s~\cite{E62} proved a similar result.
%Erd\H{o}s considered the generalized Tur\'{a}n problem in~\cite{E62}, in which he determined $\mathrm{ex}(n,K_s, K_t)$ for all $t > s \ge 3$.
In~\cite{AS16}, Alon and Shikhelman started a systematic study of $\mathrm{ex}(n,Q,F)$, and since then this topic has attracted much attention.
Especially, determining the generalized Tur\'{a}n number $\mathrm{ex}(n,K_3,F)$ for various graphs $F$ became worth-thinking.
In~\cite{GP2022}, Gerbner and Patk{\'o}s determined $\mathrm{ex}(n,K_3, S_p^{\triangle})$ and $\mathrm{ex}(n,K_3, M_p^{\triangle})$ for any $p$ and sufficiently large $n$.
Later, Yuan and Yang~\cite{Yuan2022} determined $\mathrm{ex}(n,K_3, M_2^{\triangle})$ for all $n$, and Zhu \etal~\cite{Zhu2023} determined $\mathrm{ex}(n,K_3, S_k^{\triangle})$ for positive $k$ and $n\ge 4k^3$.
Recently, Lv \etal~\cite{LGHSTVZ24} determined the exact value of $\mathrm{ex}(n,K_3,P_3^{\triangle})$ for $n\ge 300^{300}$, and $\mathrm{ex}(n,K_3,C_3^{\triangle})$ for $n\ge 22$.
In addition, they proposed the following conjecture:
\begin{conjecture}[{\cite{LGHSTVZ24}}]\label{conj:Lv-etc}
    For $k\ge 4$ and sufficiently large $n$,
    \begin{align*}
            \mathrm{ex}(n,K_3,P_k^{\triangle})=\mathrm{ex}(n,K_3,C_k^{\triangle})=
                \begin{cases}
                N\left(K_3 ,S(n,\lfloor \frac{k-1}{2}\rfloor) \right), &  \text{if  $k$ is odd}, \\
                N\left(K_3 ,S^+(n,\lfloor \frac{k-1}{2}\rfloor) \right), &  \text{if  $k$ is even}.
                \end{cases}
             \end{align*}
\end{conjecture}
By combining the stability method with methods from Extremal Set Theory, specifically those employed by Kostochka, Mubyai and Verstra\"{e}te~\cite{KMV15a,KMV17b} for hypergraph linear cycles and paths, Liu, Song and Yuan~\cite{LS23a} confirmed Conjecture~\ref{conj:Lv-etc} for $P_k^{\triangle}$ when $k \ge 4$ and for $C_k^{\triangle}$ when $k \ge 5$.
In this note, we determine the value of $\mathrm{ex}(n,K_3,C_4^{\triangle})$ for sufficiently large $n$.
In particular, our result shows that Conjecture~\ref{conj:Lv-etc} is false for $C_{4}^{\triangle}$.

Define $T^=(n)$ as the graph obtained from $T(n)$ by adding $e_1$ into $X$, $e_2$ into $Y$ and subsequently removing a copy of $M_2$ between $e_1$ and $e_2$ in $T(n)$.
Let $S(n):=S(n,1), S^+(n):=S^+(n,1)$ and $S^=(n):=K_1\uproduct T^=(n-1)$ (view $S^=(n)$ in Figure~\ref{fig:Path-Cycle-Expansion}).
It is easy to verify that any $C_4$ in $S^=(n)$ can not expand into a copy of $C_4^{\triangle}$ in $S^=(n)$. In fact, the edges between $X$ and $Y$  can only form a triangle with $x_0$ in $S^=(n)$. So if $C_4$ in $C_{4}^{\triangle}$ contains both or none of $e_1$ and $e_2$, then there are at least two edges of $C_4$ between $X$ and $Y$, a contradiction to the fact. If $C_4$ in $C_{4}^{\triangle}$ contains only one of $e_1$ and $e_2$, then $C_4$ contains vertex $x_0$ and an edge between $X$ and $Y$, a contradiction to the fact.  Hence, $S^=(n)$ is $C_4^{\triangle}$-free.
Moreover, for $n\ge 7$, a simple calculation shows that $N(K_3 ,S^+(n))  = N(K_3 ,S(n))+\lceil \frac{n-1}{2}\rceil$ and
\begin{align*}
	N\left(K_3 ,S^=(n)\right) & = N\left(K_3 ,S(n)\right) -2 +\left( \left\lceil \frac{n-1}{2} \right\rceil -2 \right) +\left( \left\lfloor \frac{n-1}{2} \right\rfloor -2 \right) \\
	& = N\left(K_3 ,S(n) \right) + (n - 7) .
\end{align*}
Note that $S(n)$ is a proper subgraph of $S^+(n)$. Therefore, it is evident that
\begin{equation}\label{equ:sequence S}
N\left(K_3 ,S^=(n) \right) > N\left(K_3 ,S^+(n) \right) > N\left(K_3 ,S(n) \right) \text{ for } n\ge 15.
\end{equation}
%In particular, $N\left(K_3 ,S^=(n)\right) = N\left(K_3 ,S(n) \right) + (n - 7).$
Our main theorem is as follows:
\begin{theorem}\label{THM:Main}
    Suppose that $n$ is sufficiently large.
    Then $\mathrm{ex}(n, K_{3}, C_4^{\triangle}) = N\left(K_3 ,S^=(n)\right)$.
    Moreover, $S^=(n)$ is the unique extremal graph.
\end{theorem}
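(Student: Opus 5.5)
We follow the stability approach of Liu, Song and Yuan~\cite{LS23a}, in three stages: a rough structural description, a cleaning step, and an exact local optimization.

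\textbf{Step 1: rough structure.} Let $G$ be an $n$-vertex $C_4^{\triangle}$-free graph with $N(K_3,G)\ge N(K_3,S^=(n))=n^2/8+O(n)$. We may assume every edge lies in a triangle, since deleting edges in no triangle changes nothing. The first goal is to show that $G$ contains a vertex $x_0$ whose link graph $G[N(x_0)]$ has $(1/4-o(1))n^2$ edges and is nearly complete bipartite. The natural tool is the removal lemma applied to the triangle-edge incidence structure. A $C_4^{\triangle}$-free graph has few edge-disjoint "fat" configurations, so the triangles concentrate: by the argument in~\cite{LS23a} for $C_k^{\triangle}$, $k\ge5$, which rests on a Kostochka--Mubayi--Verstra\"ete type bound for linear $3$-uniform cycles, all but $o(n^2)$ triangles contain a common vertex $x_0$. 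Here we need the version for linear $C_4$ in the $3$-graph of triangles. The link $L=G[N(x_0)]$ must be $P$-free for suitable small $P$, and it must have about $n^2/4$ edges. We then apply Erd\H{o}s--Simonovits stability to $L$, since a copy of $K_3$ in $L$ together with $x_0$ easily yields $C_4^{\triangle}$ when degrees are large. This gives a partition $X\cup Y$ of $V\setminus\{x_0\}$ with $o(n^2)$ edges inside $X$ and inside $Y$, and $o(n^2)$ non-edges between them.

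\textbf{Step 2: cleaning.} We next use the minimum-degree and symmetrization trick: vertices lying in fewer than $(1/2-\varepsilon)n$ triangles are deleted one at a time, and since each deletion strictly improves the slack, the count cannot drop. This reduces to the case where every vertex of $X\cup Y$ is adjacent to $x_0$ and to all but $\varepsilon n$ vertices of the opposite side. From this we deduce that $G-x_0$ has no triangle avoiding $x_0$ except those formed by an edge $e$ inside $X$ or $Y$ together with a common neighbour across. We then classify the edges inside the parts. Two disjoint inside edges $e_1,e_2$ both in $X$ (or both in $Y$), or a path of length two inside a part, combined with $x_0$ and the high crossing degrees, produce a $C_4^{\triangle}$; the triangles come from $x_0$ and from common neighbours across the parts. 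Hence there is at most one inside edge per part. If both $e_1\subseteq X$ and $e_2\subseteq Y$ are present, the cycle $e_1$--cross--$e_2$--cross would give $C_4^{\triangle}$ unless certain crossing edges between $V(e_1)$ and $V(e_2)$ are missing. We must verify that at least a matching $M_2$ between them must be absent.

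\textbf{Step 3: exact optimization.} With this structure, triangles are counted as $x_0$-triangles, namely the edges of $G-x_0$, plus the triangles through $e_1$ and through $e_2$. Maximizing over $|X|,|Y|$ and over the missing edges then gives exactly $S^=(n)$, with uniqueness following from strict inequalities such as \eqref{equ:sequence S}. The main obstacle is Step 1, locating $x_0$: the $C_4$ case is precisely where the general argument of~\cite{LS23a} fails. I would first try to show that the triangle $3$-graph has bounded "shadow degree" outside one vertex via a direct counting of linear $4$-cycles, falling back on a weaker bound $N(K_3,G)\le n^2/8+o(n^2)$ and then a supersaturation-based stability.
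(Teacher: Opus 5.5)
Your outline (stability, cleaning, exact optimization) has the right shape, but each stage has a real hole, and you have misplaced where $C_4^{\triangle}$ differs from $k\ge 5$.

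First, Step~1 is never carried out, and your fallback rests on a false estimate. Each of the $\lfloor (n-1)^2/4\rfloor$ edges of $T(n-1)$ spans a triangle with the apex, so $N(K_3,S^=(n))=n^2/4+O(n)$, not $n^2/8+O(n)$. The bound ``$N(K_3,G)\le n^2/8+o(n^2)$'' already fails for $S(n)$. This is also inconsistent with your own deletion threshold $(1/2-\varepsilon)n$, which is calibrated to $n^2/4$. The deletion process itself needs $\mathrm{ex}(m,K_3,C_4^{\triangle})\le m^2/4+o(m^2)$ to stop after $o(n)$ steps, which is Step~1 again. Moreover, stability is not the obstacle. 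The stability theorem of Liu--Song--Yuan holds for $C_4^{\triangle}$, and the paper simply imports it as Lemma~\ref{THM:GenTuran-Cycle-Stability}: there is a vertex $x$ such that $G-x$ has at least $n^2/4-\delta n^2$ edges, is $\delta n^2$-close to bipartite, and has at most $\delta n^2$ triangles. What breaks for $k=4$ is the exact stage. $S^=(n)$, with surplus $n-7$ over $S(n)$, beats the conjectured $S^+(n)$, whose surplus is only $\lceil (n-1)/2\rceil$. So you must show that no configuration gains more than $n-7$.

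Second, your cleaning step does not give ``every vertex of $X\cup Y$ is adjacent to $x_0$''. Take $S(n)$ with apex $x_0$ and parts $X,Y$, pick $u,v\in X$, delete $vx_0$ and add $uv$. In the resulting graph:
\begin{itemize}
\item every cross edge avoiding $u,v$ has $x_0$ as its only co-neighbour;
\item each edge $ub$ ($b\in Y$) has co-neighbours $\{x_0,v\}$, and each edge $vb$ has only $u$.
\end{itemize}
One checks directly that no $4$-cycle can have its edges expanded by four distinct outside vertices, so the graph is $C_4^{\triangle}$-free. It has exactly $N(K_3,S(n))$ triangles, and $v$ lies in $|Y|\approx n/2$ triangles $uvw$, none through $x_0$. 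So no triangle-degree threshold can exclude such vertices. You must charge their loss against a sharp bound on all extra triangles, and this is the idea your proposal lacks.

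The paper does this as follows. It keeps all vertices and isolates two bad sets: $\overline{D}$, the vertices $v$ with $vx$ in fewer than $n/200$ triangles (including non-neighbours of $x$), and $\overline{D'}$, the vertices of small cross degree. Each bad vertex costs linearly many triangles: $|\mathcal{M}_1|\ge\frac{49}{100}n|\overline{D}|$ and $|\mathcal{M}_2|\ge(\frac1{10}-2\sqrt\delta)n|\overline{D'}|$. The decisive estimate on the gain is $|\mathcal{B}_1''|+|\mathcal{B}[D]|\le n-1$ in Claim~\ref{CLAIM:GenTuran-cycle-B1}. It follows from Claim~\ref{CLAIM:GenTuran-cycle-P_2} (no vertex has two disjoint link edges in $G-x$ meeting $D$), which forces these triangles into vertex-disjoint $K_4$'s and $2$-intersecting families. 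The total gain is then at most $n+1$ against a target of $n-7$, so a single bad vertex is already fatal.

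Finally, the point you leave as ``we must verify'' is Claim~\ref{GenTuran-no-D1,D2-edge}. Suppose, say, $u_1v_1u_2$ were a triangle. Choose $w\in V_1\setminus\{u_1,v_1\}$ that is a common neighbour of $u_2$ and $v_2$. Then $u_2$ has the disjoint link edges $u_1v_1$ and $v_2w$, contradicting Claim~\ref{CLAIM:GenTuran-cycle-P_2}. Hence the cross edges between $V(e_1)$ and $V(e_2)$ form a matching, and $G\subseteq S^=(n)$.
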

%
%%%%%%%%%%%%%%%%%%%%%%%%%%%%%%%%%%%%%%%%%%%
%%%%%%%%%%%%%%%%%%%%%%%%%%%%%%%%%%%%%%
\begin{figure*}[htbp]
\centering
\begin{subfigure}{0.4\textwidth}
\centering
\tikzset{every picture/.style={line width=0.75pt}} %set default line width to 0.75pt
\begin{tikzpicture}[x=0.75pt,y=0.75pt,yscale=-1,xscale=1,scale=0.8]

\draw[line width=1.5pt,color=sqsqsq]  (0,0) -- (0,60) -- (60,60) -- (60,0) -- cycle;

\draw[line width=1pt]  (0,0) -- (-40,30) -- (0,60) -- cycle ;
\draw[line width=1pt]  (0,0) -- (30,-40) -- (60,0) -- cycle ;
\draw[line width=1pt]  (60,60) -- (30,100) -- (0,60) -- cycle ;
\draw[line width=1pt]  (60,60) -- (100,30) -- (60,0) -- cycle ;

\draw [fill=uuuuuu] (0,0) circle (2pt);
\draw [fill=uuuuuu] (0,60) circle (2pt);
\draw [fill=uuuuuu] (60,60) circle (2pt);
\draw [fill=uuuuuu] (60,0) circle (2pt);
\draw [fill=blue]   (-40,30)  circle (2pt);
\draw [fill=blue]   (30,-40) circle (2pt);
\draw [fill=blue]   (30,100)  circle (2pt);
\draw [fill=blue]   (100,30)  circle (2pt);
\draw [fill=blue]   (0,130)  circle (0pt);
\end{tikzpicture}
%\caption{The configuration of $C_4^{\triangle}$.}
%\label{}
\end{subfigure}
\quad
\begin{subfigure}{0.4\textwidth}
\centering
\tikzset{every picture/.style={line width=0.75pt}} %set default line width to 0.75pt
\begin{tikzpicture}[x=0.75pt,y=0.75pt,yscale=-1,xscale=1,scale=0.8]

\draw[line width=1pt] (-60,-40) ellipse (30 and 100) ;
\draw[line width=0.8pt, fill=sqsqsq]  (-60,-90) -- (-60,-120) ;
\node at (-70,-105){\footnotesize $e_1$};

\draw[line width=1pt] (60,-40) ellipse (30 and 100) ;
\draw[line width=0.8pt, fill=sqsqsq]  (60,-90) -- (60,-120) ;
\node at (70,-105){\footnotesize $e_2$};

\draw[line width=0.8pt, fill=sqsqsq]  (-60,-90) --  (60,-90) ;
\draw[line width=0.8pt, dashed]  (-60,-90) --  (60,-120) ;
\draw[line width=0.8pt, dashed]  (-60,-120) --  (60,-90) ;
\draw[line width=0.8pt, fill=sqsqsq]  (-60,-120) --  (60,-120) ;

\draw[line width=0.5pt, fill=sqsqsq]  (0,-140) --  (-60,-90) ;
\draw[line width=0.5pt, fill=sqsqsq]  (0,-140) -- (-60,-120) ;
\draw[line width=0.5pt, fill=sqsqsq]  (0,-140) --  (-60,-40) ;
\draw[line width=0.5pt, fill=sqsqsq]  (0,-140) -- (-60,-10) ;
\draw[line width=0.5pt, fill=sqsqsq]  (0,-140) --  (-60,20) ;
\draw[line width=0.5pt, fill=sqsqsq]  (0,-140) --  (60,-90) ;
\draw[line width=0.5pt, fill=sqsqsq]  (0,-140) -- (60,-120) ;
\draw[line width=0.5pt, fill=sqsqsq]  (0,-140) --  (60,-40) ;
\draw[line width=0.5pt, fill=sqsqsq]  (0,-140) -- (60,-10) ;
\draw[line width=0.5pt, fill=sqsqsq]  (0,-140) --  (60,20) ;

\draw[line width=0.5pt, fill=sqsqsq]  (60,-90) --  (-60,-40) ;
\draw[line width=0.5pt, fill=sqsqsq]  (60,-90) -- (-60,-10) ;
\draw[line width=0.5pt, fill=sqsqsq]  (60,-90) --  (-60,20) ;
\draw[line width=0.5pt, fill=sqsqsq]  (60,-120) --  (-60,-40) ;
\draw[line width=0.5pt, fill=sqsqsq]  (60,-120) -- (-60,-10) ;
\draw[line width=0.5pt, fill=sqsqsq]  (60,-120) --  (-60,20) ;

\draw[line width=0.5pt, fill=sqsqsq]  (-60,-90) --  (60,-40) ;
\draw[line width=0.5pt, fill=sqsqsq]  (-60,-90) -- (60,-10) ;
\draw[line width=0.5pt, fill=sqsqsq]  (-60,-90) --  (60,20) ;
\draw[line width=0.5pt, fill=sqsqsq]  (-60,-120) --  (60,-40) ;
\draw[line width=0.5pt, fill=sqsqsq]  (-60,-120) -- (60,-10) ;
\draw[line width=0.5pt, fill=sqsqsq]  (-60,-120) --  (60,20) ;

\draw[line width=0.5pt, fill=sqsqsq]  (60,-40) --  (-60,-40) ;
\draw[line width=0.5pt, fill=sqsqsq]  (60,-40) -- (-60,-10) ;
\draw[line width=0.5pt, fill=sqsqsq]  (60,-40) --  (-60,20) ;
\draw[line width=0.5pt, fill=sqsqsq]  (60,-10) --  (-60,-40) ;
\draw[line width=0.5pt, fill=sqsqsq]  (60,-10) -- (-60,-10) ;
\draw[line width=0.5pt, fill=sqsqsq]  (60,-10) --  (-60,20) ;
\draw[line width=0.5pt, fill=sqsqsq]  (60,20) --  (-60,-40) ;
\draw[line width=0.5pt, fill=sqsqsq]  (60,20) -- (-60,-10) ;
\draw[line width=0.5pt, fill=sqsqsq]  (60,20) --  (-60,20) ;

\draw [fill=blue]   (-60,-40) circle (2pt);
\draw [fill=blue]   (-60,-10) circle (2pt);
\draw [fill=blue]   (-60,20) circle (2pt);
\draw [fill=blue]   (-60,-90) circle (2pt);
\draw [fill=blue]   (-60,-120) circle (2pt);

\draw [fill=blue]   (60,-40) circle (2pt);
\draw [fill=blue]   (60,-10) circle (2pt);
\draw [fill=blue]   (60,20) circle (2pt);
\draw [fill=blue]   (60,-90) circle (2pt);
\draw [fill=blue]   (60,-120) circle (2pt);

\node at (0,-150){\footnotesize $x_0$};
\draw [fill=uuuuuu] (0,-140) circle (3pt);

\end{tikzpicture}
%\caption{The configuration of $S^=(n)$.}
%\label{}
\end{subfigure}
\caption{The graph $C_4^{\triangle}$ and the graph $S^=(n)$.}
\label{fig:Path-Cycle-Expansion}
\end{figure*}
%uncomment if require: \path (0,254); %set diagram left start at 0, and has height of 254
%%%%%%%%%%%%%%%%%%%%%%%%%%%%%%%%%%%%%%

%The remainder of this paper is organized as follows.
%
%In Section~\ref{SEC:Preliminaries}, we present some definitions and preliminary results related to $\triangle$-blowup graphs.
%
%In Section~\ref{SEC:Thm-Main}, we prove our main theorem.
%
%%%%%%%%%%%%%%%%%%%%%%%%%%%%%%%%%%%%%%%%%%%%%%%%
\section{The proof of Theorem~\ref{THM:Main}}\label{SEC:Thm-Main}
%
%Using the stability method developed by Siminovits~\cite{S68}, the proof of Theorem~\ref{THM:Main} follows a route similar to the one used in the proof of~{\cite[Theorem 1.9]{LS23a}}, differing only in a few places, such as Claims~\ref{CLAIM:GenTuran-cycle-B1},~\ref{CLAIM:GenTuran-cycle-GD1'D2'} and~\ref{CLAIM:GenTuran-even-cycle-B0}.
%Thus, we move proofs for repetitive parts to the appendix for interested readers.
Before moving on to the proof of main theorem, we introduce some necessary definitions.

Let $X$ and $Y$ be two disjoint vertex sets in $G$.
We use $G[X]$ to denote the induced subgraph on $X$,
$G[X,Y]$ to denote the induced bipartite graph between $X$ and $Y$ in $G$,
$|G[X,Y]|$ to denote the number of edges between $X$ and $Y$.
Choose $v\in V(G)$, we define {\bf links} of $v$ in $X$ by
\begin{align*}
    L_{X}(v)\coloneqq \{e\in E(G[X]) \colon \text{there exists a triangle composed by $v$ and $e$ in $G$}\},
\end{align*}
and for an edge $e\in E(G[X])$,
\begin{align*}
    N_X^{\ast}(e) \coloneqq \{u\in X\setminus V(e) \colon  \text{there exists a triangle  in $G$ containing both $e$ and $u$}\}
\end{align*}
as {\bf co-neighborhood} of $e$ in $X$.
Moreover, let $d_X(e)=|N_X^{\ast}(e)|$.

We characterize ${F}$ as a \textbf{$2$-intersecting graph} if it consists exclusively of triangles that share precisely one edge with each other.
Furthermore,
\begin{equation}\label{2-intersecting}
	\mbox{a $2$-intersecting graph on $t$ vertices contains exactly $t-2$ triangles.}
\end{equation}
Given a graph $G$, define an associated $3$-graph as follows:
$\mathcal{K}_{G} \coloneqq \left\{e \in \binom{V(G)}{3} \colon G[e] \cong K_3\right\}.$
Pertinent to this, we can confirm:
\begin{fact}\label{FACT:shadow-Turan}
    For every $F^{\triangle}$-free graph $G$, the $3$-graph $\mathcal{K}_{G}$ is $\mathcal{K}_{F^{\triangle}}$-free.
\end{fact}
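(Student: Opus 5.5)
We argue by contraposition: a copy of $\mathcal{K}_{F^{\triangle}}$ inside $\mathcal{K}_{G}$ will yield a copy of $F^{\triangle}$ inside $G$ itself. The only structural input is that every edge of $F^{\triangle}$ lies in a triangle of $F^{\triangle}$.

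Suppose $G$ is $F^{\triangle}$-free but $\mathcal{K}_{G}$ contains a copy of the $3$-graph $\mathcal{K}_{F^{\triangle}}$. Then there is an injection $\varphi\colon V(F^{\triangle})\to V(G)$ such that $\varphi(T)\in\mathcal{K}_{G}$ for every $T\in\mathcal{K}_{F^{\triangle}}$. That is, whenever $T$ spans a triangle in $F^{\triangle}$, its image $\varphi(T)$ spans a triangle in $G$.

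Next we use the defining property of the expansion. In $F^{\triangle}$, each edge $uv$ of $F$ is replaced by a triangle $\{u,v,w_{uv}\}$, and $F^{\triangle}$ has no other edges. So every edge $xy$ of $F^{\triangle}$ lies in at least one such triangle $T_{xy}=\{u,v,w_{uv}\}$, and $T_{xy}\in\mathcal{K}_{F^{\triangle}}$. Note that $\mathcal{K}_{F^{\triangle}}$ may also contain further triangles, for example the central triangle when $F=C_3$; this is harmless, since we only need each edge to be covered by some triangle.

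Then $\varphi(T_{xy})$ spans a $K_3$ in $G$, so in particular $\varphi(x)\varphi(y)\in E(G)$. Hence $\varphi$ maps every edge of $F^{\triangle}$ to an edge of $G$. Since $\varphi$ is injective, $\varphi(F^{\triangle})$ is a copy of $F^{\triangle}$ as a subgraph of $G$. This contradicts the assumption that $G$ is $F^{\triangle}$-free.

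The argument has no real obstacle. The one point needing care is the edge-covering step above: one must check that every edge of $F^{\triangle}$ is indeed contained in one of its triangles, which holds because $F^{\triangle}$ is, by definition, the union of the expansion triangles.
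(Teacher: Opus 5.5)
Your argument is correct, and it is the routine verification the paper leaves implicit, since the paper states this Fact without proof: every edge of $F^{\triangle}$ lies in an expansion triangle, so an injection sending all triangles of $F^{\triangle}$ to triangles of $G$ sends every edge to an edge and yields a copy of $F^{\triangle}$ in $G$. Your remark that any extra triangles in $\mathcal{K}_{F^{\triangle}}$ (as for $F=C_3$) are harmless is also correct.
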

Let $\mathrm{ex}_3(n, \mathcal{K}_{G})$ be the maximum number of $3$-edges in a $\mathcal{K}_{G}$-free 3-graph on $n$-vertices.
In~\cite{KMV15a}, Kostochka, Mubyai and Verstra\"{e}te gave an asymptotic result on $\mathrm{ex}_3(n, \mathcal{K}_{C_k})$ as follows:
\begin{lemma}[{\cite[Theorem~6.1]{KMV15a}}]\label{KMV asymptotics}
	Let $k\ge 4$, $\ell=\left\lfloor\frac{k-1}{2}\right\rfloor$ and $n$ be sufficiently large.
	Then $$\mathrm{ex}_3(n, \mathcal{K}_{C_k}) = \ell \binom{n}{\ell-1} + o(n^{2}).$$
\end{lemma}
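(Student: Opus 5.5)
We read the right-hand side as $\ell\binom{n}{2}+o(n^{2})$ rather than $\ell\binom{n}{\ell-1}+o(n^2)$. The extremal configuration below has this many triples, and for $k=4$ (so $\ell=1$) the printed $\ell\binom{n}{\ell-1}$ equals $1$, which is clearly not intended. The proof has a construction for the lower bound and a stability-type argument for the upper bound.

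\emph{Lower bound.} Fix a set $S$ with $|S|=\ell$ and let $\mathcal H$ consist of all triples meeting $S$, so $|\mathcal H|=\binom n3-\binom{n-\ell}3=\ell\binom n2+O(n)$. Suppose $\mathcal H$ contained a copy of $\mathcal K_{C_k}$, i.e.\ a linear $3$-uniform $k$-cycle. Each of its $k$ triples contains a vertex of $S$. In a linear cycle every vertex lies in at most two triples, so at most $2\ell\le k-1$ triples can meet $S$, a contradiction.

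\emph{Upper bound.} Let $\mathcal H$ be $\mathcal K_{C_k}$-free with $|\mathcal H|\ge \ell\binom n2+\varepsilon n^2$, and derive a contradiction in three steps.

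\textbf{Step 1: cleaning.} Repeatedly delete triples containing a pair of codegree less than $3k$, and vertices whose link graph has no matching of size $3k$. I expect this to cost only $O(n^2)$ edges with a small constant, or $o(n^2)$ after a more careful delta-system argument. Füredi's delta-system lemma gives a subhypergraph $\mathcal H'$ with $|\mathcal H'|\ge(\ell+\varepsilon/2)\binom n2$ in which every triple has a homogeneous kernel structure. The point of this structure is that linear paths can be extended greedily while avoiding any $O(k)$ forbidden vertices.

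\textbf{Step 2: few heavy vertices.} Call $v$ heavy if $d(v)\ge \delta n^2$. Suppose there are $\ell+1$ heavy vertices $v_1,\dots,v_{\ell+1}$. Their link graphs are dense, so we can route a linear cycle through them. Each $v_i$ serves as the common vertex of two consecutive triples, using two disjoint edges of its link. Consecutive heavy vertices are connected by short linear paths supplied by the codegree and kernel conditions from Step 1. This gives linear cycles of every length at least $2(\ell+1)\ge k$ edges, and in particular of length exactly $k$ by adjusting the connecting paths. So at most $\ell$ vertices are heavy, and together they cover at most $\ell\binom n2+O(n)$ triples.

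\textbf{Step 3: the remainder.} The remaining part $\mathcal H''$ (triples avoiding heavy vertices) then has at least $\varepsilon n^2/2$ triples while every vertex has degree below $\delta n^2$. We must show such a hypergraph contains a linear $k$-cycle. The natural tool is a bipartite reduction: pass to a $3$-partite subhypergraph and consider the bipartite "pair–vertex" incidence structure. There, $\Omega(n^2)$ triples with bounded degree force a $K_{2,t}$-type configuration in the shadow. This closes a linear cycle of the required length, in the style of the Erd\H{o}s–Gallai argument for long cycles, adapted to linear cycles as in~\cite{KMV15a}.

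\textbf{Main obstacle.} Step 3 is the hard part: extracting a linear $C_k$ from a quadratically dense $3$-graph with no heavy vertex. If the direct approach fails, I would fall back on the full KMV machinery: the ``crosscut'' bound $\mathrm{ex}_3(n,\mathcal K_{C_k})\le \sigma\binom n2+o(n^2)$, where $\sigma=\ell$ is the minimum crosscut size of $C_k$, proved via the delta-system method.
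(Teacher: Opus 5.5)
You are right that the right-hand side should be $\ell\binom{n}{2}+o(n^2)$, with $\mathcal{K}_{C_k}$ meaning the linear $3$-uniform $k$-cycle $\mathcal{K}_{C_k^{\triangle}}$. Your lower-bound construction is correct. The paper does not prove this lemma: it quotes it from Kostochka--Mubayi--Verstra\"ete. Within the paper it is only used through the crude bound $\mathrm{ex}_3(m,\mathcal{K}_{C_4})\le Cm^2$, for $m=n$ and for $m=|\overline{D}|$ in the bound on $|\mathcal{B}_3|$.

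Your upper bound has a genuine gap, starting with Step~1. Deleting triples through pairs of codegree less than $3k$ can cost up to $(3k-1)\binom{n}{2}>\ell\binom{n}{2}$ triples, and it destroys the extremal configuration. Among the triples meeting an $\ell$-set $S$, every pair outside $S$ has codegree exactly $\ell<3k$, so only $O(n)$ triples survive.

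In fact, for a $C_k$-free $\mathcal{H}$ the cleaned hypergraph is always empty. Suppose a triple survives, and let $v$ be one of its vertices. For each shadow neighbour $a$ of $v$, the at least $3k$ third vertices of $va$ are shadow neighbours of both $v$ and $a$. So the shadow neighbourhood of $v$ induces a graph of minimum degree at least $3k$, and it contains a path $a_0a_1\cdots a_{k-2}$. The shadow cycle $va_0\cdots a_{k-2}v$ has all codegrees at least $3k$, so it lifts greedily to a linear $k$-cycle.

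Hence Step~1 proves exactly $\mathrm{ex}_3(n,\mathcal{K}_{C_k})<3k\binom{n}{2}$, which happens to be all this paper needs. It can never produce the constant $\ell$, and your claimed $|\mathcal{H}'|\ge(\ell+\varepsilon/2)\binom{n}{2}$ is impossible. F\"uredi's lemma does not rescue this, since it only retains a fraction $c(3,s)$ of the triples. At precision $o(n^2)$ you cannot afford any loss of order $cn^2$ with $c$ fixed.

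Step~2 is false for $C_k$-free $3$-graphs in general, even homogeneous ones. Take vertices $u_1,\dots,u_{\ell+1}$ and pairwise disjoint sets $A_1,\dots,A_{\ell+1}$ of size about $n/(\ell+1)$, and take all triples $\{u_i,a,b\}$ with $a,b\in A_i$. A linear cycle is connected, so it lies in one component, where every triple contains $u_i$. But a vertex lies in at most two triples of a linear cycle, so this $3$-graph has no linear cycle at all. Yet it has $\ell+1$ vertices of degree $\Omega(n^2)$.

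The length bookkeeping also fails. Using $\ell+1$ heavy vertices as joints already gives $2\ell+2\ge k$ triples, so there is no room for connecting paths, and for odd $k$ the cycle overshoots $k$ by one. Step~3 is only asserted; the $K_{2,t}$ sentence is not an argument.

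The missing ingredient is a counting argument with no $\Theta(n^2)$ loss that controls how the links of high-degree vertices interact. That is the real content of the cited theorem. Falling back on KMV is exactly what the paper does. One correction there: the minimum crosscut of the linear $k$-cycle is $\lceil k/2\rceil=\ell+1$, and the crosscut bound reads $(\sigma-1)\binom{n}{2}$, not $\sigma\binom{n}{2}$ with $\sigma=\ell$.
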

Drawing from Fact~\ref{FACT:shadow-Turan} and Lemma~\ref{KMV asymptotics},
we identify a positive constant $C$ satisfying
\begin{equation}\label{equ:upper bound Cktriangle}
    \mathrm{ex}(n,K_3,C_4^{\triangle})\le \mathrm{ex}_3(n, \mathcal{K}_{C_4}) \le Cn^2.
\end{equation}

To prove our main result, a key tool is the following stability lemma.
\begin{lemma}[{\cite[Theorem~1.11]{LS23a}}]\label{THM:GenTuran-Cycle-Stability}
    For every positive constant $C^{-1}\gg \delta >0$, there exist $\varepsilon>0$ and $n_0\in \mathbb{N}$ such that the following holds for all $n \ge n_0$.
    Suppose that $G$ is an $n$-vertex $C_4^{\triangle}$-free graph with $N(K_{3}, G) \ge\left(\frac{1}{4} - \varepsilon\right)n^2,$
    then there exists a vertex $x$ such that
\begin{enumerate}[label=(\roman*)]

    \item $|E(G-x)| \ge n^2/4 - \delta n^2$, and
    \item $G-x$ can be made bipartite by removing at most $\delta n^2$ edges, and
    \item $N(K_3,G-x)\leq \delta n^{2}$.

\end{enumerate}
\end{lemma}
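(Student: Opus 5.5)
The plan is to work with the triangle $3$-graph $\mathcal{K}_G$, where $N(K_3,G)=|\mathcal{K}_G|$. I would show that almost all triangles of $G$ pass through a single vertex $x$. Once that is established, items (i)--(iii) follow from classical stability for triangle-free graphs.

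\textbf{Step 1: a vertex with a huge link.} For $v\in V(G)$, the triangles through $v$ correspond to the edges of $L(v)\subseteq G[N(v)]$, so $\sum_v |L(v)| = 3N(K_3,G)\ge 3(\tfrac14-\varepsilon)n^2$.

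I would first show that some vertex satisfies $|L(x)|\ge \eta n^2$ for a constant $\eta=\eta(C)>0$. Suppose instead that every link has $o(n^2)$ edges. Then the delta-system/kernel argument of Kostochka--Mubayi--Verstra\"ete behind Lemma~\ref{KMV asymptotics} should apply: one passes to a large subfamily of $\mathcal{K}_G$ in which every pair lies in at most one or in linearly many triples. The absence of a vertex of quadratic degree then lets one embed a linear $4$-cycle, hence a copy of $\mathcal{K}_{C_4^{\triangle}}$. By Fact~\ref{FACT:shadow-Turan} this is impossible. Concretely, I would greedily link two vertex-disjoint ``fat'' pairs through two further triangles. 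This gives $N(K_3,G)=o(n^2)$, which contradicts the assumption.

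\textbf{Step 2: the rest is nearly triangle-free.} Fix $x$ with $|L(x)|\ge \eta n^2$. I claim $N(K_3,G-x)\le \delta n^2$, which is item (iii). Suppose $G-x$ had more than $\delta n^2$ triangles. By supersaturation, the links of $x$ and the triangles of $G-x$ together produce one of two configurations:
\begin{itemize}
\item two disjoint edges $ab,cd\in L(x)$ with $bc$ and $da$ each lying in a triangle of $G-x$ whose third vertices are new and distinct; or
\item a triangle of $G-x$ sharing an edge with a path through $x$.
\end{itemize}
Either way one gets a copy of $C_4^{\triangle}$ on the cycle $x\,a\,\ldots$ or $a\,b\,c\,d$. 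The key count is the following. Since $L(x)$ has $\Omega(n^2)$ edges, it has a matching of linear size. A triangle-rich graph on $V(G)\setminus\{x\}$ has $\Omega(n^2)$ edges of codegree at least $1$. Averaging over pairs of matching edges then yields the required disjoint extensions. This step is where the $C_4$ (rather than $C_k$) structure is used, and I expect it to be the main obstacle. The difficulty is guaranteeing that all $8$ vertices are distinct when codegrees are small, so I would first delete all edges of $G-x$ that lie in fewer than $10$ triangles. This removes at most $O(n^2)$ triangles, each counted once.

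\textbf{Step 3: conclusion.} From Step 2,
\[
|L(x)| \ \ge\ N(K_3,G)-N(K_3,G-x)\ \ge\ \left(\tfrac14-\varepsilon-\delta\right)n^2 .
\]
Since $L(x)\subseteq E(G-x)$, this gives item (i) after rescaling $\delta$.

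Moreover $G-x$ has at most $\delta n^2$ triangles. By the triangle removal lemma, it becomes triangle-free after deleting $o(n^2)$ edges, while keeping at least $(\tfrac14-2\delta)n^2$ edges. The Erd\H{o}s--Simonovits stability theorem for $K_3$ then makes $G-x$ bipartite after removing a further $\delta n^2$ edges, which is item (ii).

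Choosing $\varepsilon\ll\eta\ll\delta$ and $n_0$ large makes all the error terms compatible.
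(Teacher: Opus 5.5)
The paper gives no proof of this lemma; it is quoted from Liu, Song and Yuan, so your proposal has to stand on its own. The outline is reasonable, and Step~3 is fine: given (iii), item (i) follows from $|L(x)|=N(K_3,G)-N(K_3,G-x)$, and item (ii) follows from triangle removal plus Erd\H{o}s--Simonovits stability after rescaling $\delta$. But Steps~1 and~2 carry all the content, and both are stated in forms that are false, because neither uses the constant $\frac14$ in the hypothesis.

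\emph{Step~1.} Let $A$ be an independent set of size $n/2$, joined completely to a perfect matching $\{a_ib_i\}$ on the other $n/2$ vertices. Its triangles are exactly the triples $v\,a_i\,b_i$ with $v\in A$, so there are $n^2/8$ of them, yet every link has at most $n/2$ edges. The graph is $C_4^{\triangle}$-free. Every $4$-cycle has the form $v\,u\,v'\,u'$ with $v,v'\in A$, and the two cycle edges $vu$ and $uv'$ have the partner of $u$ as their only co-neighbour, so they cannot receive distinct new vertices. Thus ``all links are $o(n^2)$ implies $N(K_3,G)=o(n^2)$'' is false. Your linking mechanism also fails on this example: the matching edges are precisely the fat pairs, and no triangles connect two of them.

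\emph{Step~2.} Take the disjoint union of $\{x\}$ joined to $K_{p,p}$ with $p=\sqrt{\eta}\,n$, and the graph above on the remaining vertices. It is $C_4^{\triangle}$-free, since each component is and $C_4^{\triangle}$ is connected. Here $|L(x)|=\eta n^2$, yet $N(K_3,G-x)\ge(\frac18-\sqrt{\eta})n^2$. The triangles of $G-x$ need not touch $L(x)$ at all, so no averaging over a matching of $L(x)$ can force a $C_4^{\triangle}$.

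There are two local errors as well. Your first configuration is not an embedding as described: on the cycle $abcd$ with $ab,cd\in L(x)$, both $ab$ and $cd$ would have to be expanded through the same vertex $x$. And deleting the edges of $G-x$ that lie in fewer than $10$ triangles can destroy every triangle of $G-x$; in the example, each edge between $A$ and the matching lies in exactly one triangle. So that loss is not negligible compared with $\delta n^2$.

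Neither example contradicts the lemma, since both have well under $n^2/4$ triangles. What they show is that the vertex with a quadratic link, and the near triangle-freeness of $G-x$, can only be forced by using $N(K_3,G)\ge(\frac14-\varepsilon)n^2$ quantitatively. The missing idea is an argument that any triangle structure not concentrated at a single vertex costs a constant fraction of the count. That would give that a $C_4^{\triangle}$-free graph far from $S(n)$ has at most $(\frac14-c)n^2$ triangles. This is the substance of the lemma, and the proposal does not supply it.
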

Let $\delta$ be sufficiently small, $C$ and $\varepsilon$ be the constants defined in Lemma~\ref{THM:GenTuran-Cycle-Stability}, and $n\gg C$ be sufficiently large such that $N(K_3, S(n)) > \left(\frac{1}{4} - \varepsilon\right)n^2$.
Let $G$ be an $n$-vertex $C_4^{\triangle}$-free graph with $N(K_3, G)\geq N(K_3, S^=(n)) > \left(\frac{1}{4} - \varepsilon\right)n^2$.
We may assume that every edge in $G$ is contained in some triangle of $G$, as otherwise deleting it from $G$ will not impact the value of $N(K_3, G)$.
Clearly, graph $G$ satisfies
statements $(i), (ii)$ in Lemma~\ref{THM:GenTuran-Cycle-Stability}.
%statement $(i)\sim(iv)$ in Lemma~\ref{THM:GenTuran-Cycle-Stability}.

In the subsequent discussion, we maintain the designation of the vertex $x$.
Let $V \coloneqq V(G)$, $V'\coloneqq V\setminus \{x\}$ and $V_1 \cup V_2 $ be a bipartition of $V'$ such that $|G[V_1,V_2]|$ is maximized.
We contruct an auxiliary graph $S:= \{x\}\uproduct \left(V_1 \uproduct V_2\right)$ on $V(G)$ with $x$ indentical.
Let $\mathcal{G}$ be the set of triangles in $G$,
$\mathcal{S}$ be the set of triangles in $S$,
$\mathcal{B} \coloneqq \mathcal{G}\setminus \mathcal{S}$ and $\mathcal{M} \coloneqq \mathcal{S}\setminus \mathcal{G}$.
%Define $S$, $B$ as the graphs spanned by edges in triangles of $\mathcal{S}$, $\mathcal{B}$, respectively.
Our aim is to prove $G \cong S^=(n)$ by showing $|{\cal G}| = |{\cal S}\cup {\cal B}\setminus {\cal M}| \le N\left(K_3 ,S^=(n) \right)$.

Considering statements~$(i),(ii)$ in Lemma~\ref{THM:GenTuran-Cycle-Stability}, we deduce that
    \begin{align}\label{equ:GenTuran-cycle-G[V1,V2]}
        |G[V_1, V_2]| \ge \frac{n^2}{4} - 2\delta n^2,
    \end{align}
where $\delta$ is sufficiently small.
With some simple calculations, inequality~\eqref{equ:GenTuran-cycle-G[V1,V2]} implies that
    \begin{align}\label{equ:GenTuran-cycle-Vi-size}
        \left(\frac{1}{2} - \sqrt{2\delta}\right)n
            \le |V_i|
            \le \left(\frac{1}{2} + \sqrt{2\delta}\right)n,
            \text{ for } i\in \{1,2\}.
    \end{align}
Let $\tau \coloneqq n/200, D \coloneqq \left\{v\in V' \colon d_{V(G)}(v x) \ge \tau\right\}, \overline{D} \coloneqq V'\setminus D, D_i  \coloneqq D \cap V_i$ and $\overline{D}_i \coloneqq V_i \setminus D_i$ for $i\in \{1,2\}$.
Partition each $D_i$  into two additional subsets as follows:
Define $\tau' \coloneqq {2n}/{5}$, $D_i' \coloneqq \left\{v\in D_i \colon |N_{G}(v)\cap V_{3-i}| \ge \tau'\right\}$ and $\overline{D'}_i \coloneqq D_i\setminus D_i'$ for $i\in \{1,2\}$.
Set $D'=D_1'\cup D_2'$ and $\overline{D'} \coloneqq D\setminus D'$.
We further divide the triangles of $\mathcal{M}$ by letting $\mathcal{M}_1 \coloneqq \{K_3\in \mathcal{M} \colon$triangle $K_3$ intersects $\overline{D}$ nonempty$\}$ and $\mathcal{M}_2 \coloneqq \mathcal{M}\setminus \mathcal{M}_1$.
%We will establish  Claim~\ref{CLAIM:GenTuran-cycle-D-bar}$\sim$\ref{CLAIM:GenTuran-cycle-B_2'} based on Claim $5.3, 5.4(i), 5.5, 5.6, 5.8, 5.10$ in~\cite{LS23a}, respectively.

Claims~\ref{CLAIM:GenTuran-cycle-D-bar}$\sim$\ref{CLAIM:GenTuran-cycle-B_2'} arise from Claims $5.3, 5.4(i), 5.5, 5.6, 5.8, 5.10$ in~\cite{LS23a}  respectively.
For completeness, we prove them here in a simpler form.

    \begin{claim}[{\cite[Claim~5.3]{LS23a}}]\label{CLAIM:GenTuran-cycle-D-bar}
        We have $|\mathcal{M}_1| \ge 49 n|\overline{D}|/100$ and  $|\overline{D}| \le 8 \delta n$.
        %We have $|\overline{D}| \le 8 \delta n$.
    \end{claim}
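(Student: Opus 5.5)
We first bound $|\mathcal{M}_1|$ from below by counting, for each $v\in\overline{D}$, the triangles of $\mathcal{S}$ through $v$ that are missing from $G$. Every triangle of $S$ through $v\in V_i$ is either $\{x,v,u\}$ with $u\in V_{3-i}$, or $\{v,u,w\}$ with $u\in V_{3-i}$ and $w$ in a different part than $u$ or $v$. In $S$ the only triangles have the form $x\,v\,u$ with $u\in V_{3-i}$, since $S=\{x\}\uproduct(V_1\uproduct V_2)$ has no triangle avoiding $x$. So $v$ lies in exactly $|V_{3-i}|\ge (1/2-\sqrt{2\delta})n$ triangles of $\mathcal{S}$, by \eqref{equ:GenTuran-cycle-Vi-size}. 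Because $v\in\overline{D}$, fewer than $\tau=n/200$ vertices $u$ make $xvu$ a triangle of $G$. Hence at least $(1/2-\sqrt{2\delta}-1/200)n\ge 49n/50$ of these triangles lie in $\mathcal{M}$, and they meet $\overline{D}$, so they lie in $\mathcal{M}_1$.

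A triangle $xvu$ of $\mathcal{M}_1$ is counted at most twice, once for each of its vertices $v,u$ in $\overline{D}$. Dividing by two gives $|\mathcal{M}_1|\ge \tfrac12\cdot\tfrac{49}{50}n|\overline{D}| = 49n|\overline{D}|/100$, which is the first assertion. The slack in $\delta$ is absorbed here.

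For $|\overline{D}|\le 8\delta n$, we compare $|\mathcal{G}|=|\mathcal{S}|+|\mathcal{B}|-|\mathcal{M}|$ with the lower bound $N(K_3,G)\ge N(K_3,S^=(n))$:
\begin{itemize}
\item Triangles of $\mathcal{B}$ avoiding $x$ lie in $G-x$, so there are at most $\delta n^2$ of them by Lemma~\ref{THM:GenTuran-Cycle-Stability}(iii).
\item Triangles of $\mathcal{B}$ through $x$ have the form $xab$ with $ab$ an edge inside $V_1$ or inside $V_2$. By Lemma~\ref{THM:GenTuran-Cycle-Stability}(ii) and the maximality of $|G[V_1,V_2]|$, there are at most $\delta n^2$ such edges.
\end{itemize}
So $|\mathcal{B}|\le 2\delta n^2$.

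Next, $|\mathcal{S}|=|V_1||V_2|\le n^2/4$. The target is $N(K_3,S^=(n))=N(K_3,S(n))+n-7$, where $N(K_3,S(n))=\lfloor (n-1)^2/4\rfloor$, so this target is at least $n^2/4-n/2$. Combining,
\[
\frac{n^2}{4}-\frac n2\le |\mathcal{G}|\le \frac{n^2}{4}+2\delta n^2-|\mathcal{M}_1|,
\]
which gives $|\mathcal{M}_1|\le 2\delta n^2+n/2\le 3\delta n^2$ for large $n$. Together with the first assertion, $|\overline{D}|\le \frac{100}{49}\cdot 3\delta n\le 8\delta n$.

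The delicate step is the bound on $|\mathcal{B}|$: we must make sure the triangles of $G$ through $x$ that use an edge inside some $V_i$ are genuinely controlled by (ii). They are, because each such triangle corresponds injectively to a non-bipartite edge of $G-x$, and the max-cut partition $V_1\cup V_2$ has at most $\delta n^2$ such edges. Everything else is arithmetic with \eqref{equ:GenTuran-cycle-Vi-size}.
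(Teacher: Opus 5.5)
Your upper bound $|\mathcal{M}_1|\le|\mathcal{M}|\le 3\delta n^2$ is correct. Via $|\mathcal{G}|=|\mathcal{S}|+|\mathcal{B}|-|\mathcal{M}|$ and Lemma~\ref{THM:GenTuran-Cycle-Stability}(ii),(iii), it spells out what the paper compresses into $|L_G(x)\cap L_S(x)|\ge n^2/4-3\delta n^2$. You are also right that summing over $v\in\overline{D}$ counts each triangle $xuv$ with $u\in\overline{D}_1$, $v\in\overline{D}_2$ twice; the paper's one-line count $(\min\{|V_1|,|V_2|\}-\tau)(|\overline{D}_1|+|\overline{D}_2|)$ passes over this. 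The gap is in how you absorb it. The inequality $(\tfrac12-\sqrt{2\delta}-\tfrac1{200})n\ge\tfrac{49}{50}n$ is false, since the left side is below $n/2$. Halving therefore gives only $|\mathcal{M}_1|\ge\tfrac{49}{200}n|\overline{D}|$, which is half of the first assertion. Combined with $|\mathcal{M}_1|\le 3\delta n^2$, it gives only $|\overline{D}|\le\tfrac{600}{49}\delta n\approx 12.2\,\delta n$, not $8\delta n$. A factor of $2$ is not slack in $\delta$, so neither assertion is established as written.

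The fix is a bootstrap. Do not halve; instead subtract the doubly counted triangles, of which there are at most $|\overline{D}_1||\overline{D}_2|\le|\overline{D}|^2/4$:
\[
|\mathcal{M}_1|\ \ge\ \Bigl(\tfrac{99}{200}-\sqrt{2\delta}\Bigr)n|\overline{D}|-\tfrac14|\overline{D}|^2 .
\]
\begin{enumerate}
\item Use your halved bound together with $|\mathcal{M}_1|\le 3\delta n^2$ to get the crude estimate $|\overline{D}|\le 13\delta n$.
\item Then $\tfrac14|\overline{D}|^2\le 4\delta n|\overline{D}|$, so $|\mathcal{M}_1|\ge(\tfrac{99}{200}-\sqrt{2\delta}-4\delta)n|\overline{D}|\ge\tfrac{49}{100}n|\overline{D}|$.
\item Consequently $|\overline{D}|\le\tfrac{300}{49}\delta n<8\delta n$.
\end{enumerate}
Some such a priori smallness of $\overline{D}$ is genuinely needed, because the per-vertex count alone cannot give the first assertion. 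If $\overline{D}$ were all of $V'$, every $v$ would still see about $n/2$ missing triangles at $vx$, yet $|\mathcal{M}_1|\le|V_1||V_2|<\tfrac{49}{100}n|\overline{D}|$. Also drop your stray sentence about triangles $\{v,u,w\}$ of $S$ avoiding $x$; $S$ has none, as you note right after.
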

    \begin{proof}
    	Since $|L_{{G}}(x) \cap L_{S}(x)| = |L_{{G}}(x) \cap G[V_1,V_2]| \ge {n^2}/{4} - 3\delta n^2$, by the definition of $\mathcal{M}$,
    	\begin{align}\label{equ:GenTuran-cycle-m-upper}
    		|\mathcal{M}|
    		\le |L_{S}(x)|-|L_{{G}}(x) \cap L_{S}(x)|
    		\le |V_1|\times |V_2|- \left( \frac{n^2}{4} - 3\delta n^2 \right)
    		\le 3\delta n^2.
    	\end{align}
    	From the definition of $D$, every vertex $v\in \overline{D}_i$ satisfies $|N_{{V(G)}}^{\ast}(vx) \cap V_{3-i}| \le \tau$, for $i\in \{1,2\}$.
    	Hence, the edge $vx$ is contained in at least $|V_{3-i}| - \tau$ triangles in $\mathcal{M}_1$.
    	Therefore, it follows from~\eqref{equ:GenTuran-cycle-Vi-size} that
    	\begin{align*}
    		|\mathcal{M}_1|
    		\ge \left(\min\{|V_1|, |V_2|\} - \tau \right) \left(|\overline{D}_1| + |\overline{D}_2|\right)
    		\ge \left(\frac{n}{2}-\sqrt{2\delta}n - \tau\right)|\overline{D}|
    		\ge \frac{49}{100} n |\overline{D}|.
    	\end{align*}
    	Combined with~\eqref{equ:GenTuran-cycle-m-upper}, we obtain $|\overline{D}|
    	\le 3 \delta n^2 \times 100 / (49n)
    	\le 8 \delta n$ (as $\mathcal{M}_1 \subseteq {\cal M}$ and $\delta$ is small enough).
    \end{proof}
    From~\eqref{equ:GenTuran-cycle-Vi-size} and Claim~\ref{CLAIM:GenTuran-cycle-D-bar}, we can obverse that % for sufficiently small $\delta$,
    \begin{equation}\label{equ:GenTuran-Di-size}
        |D_i|\ge \left(\frac{1}{2} - \sqrt{2\delta}\right)n - 8 \delta n \ge  \left(\frac{1}{2} - 2\sqrt{\delta}\right)n.
    \end{equation}
    \begin{claim}[{\cite[Claim~5.4~(i)]{LS23a}}]\label{CLAIM:GenTuran-cycle-P_2}
        For any $v\in V'$, there will not exist two disjoint edges $e_1,e_2\in L_G(v)\cap E(G[V'])$ such that both $e_1,e_2$ intersect $D$ nonempty.
    \end{claim}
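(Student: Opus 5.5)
Assume, for contradiction, that some $v\in V'$ has two disjoint edges $e_1=a_1b_1$, $e_2=a_2b_2$ in $L_G(v)\cap E(G[V'])$ with, say, $a_1,a_2\in D$ (possibly $v\in\{a_i,b_i\}$ is excluded since the edges lie in the link of $v$, so $v\notin e_1\cup e_2$). The plan is to build a copy of $C_4^{\triangle}$ whose underlying $4$-cycle is $v\,a_1\,x\,a_2\,v$, with the two triangles on $va_1$ and $va_2$ supplied by $b_1$ and $b_2$, and the two triangles on $a_1x$ and $xa_2$ supplied by co-neighbours of these edges. The point is that $v a_1$, $a_1 x$, $x a_2$, $a_2 v$ are all edges of $G$: $va_1,va_2$ because $e_1,e_2\in L_G(v)$, and $xa_1, xa_2$ because $a_i\in D$ means $d_{V(G)}(a_ix)\ge\tau>0$, so in particular $a_ix\in E(G)$. (If $x\in\{a_1,b_1,a_2,b_2\}$ this cannot happen, as $e_1,e_2\subseteq V'$.)

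The triangles on $va_1$ and $va_2$ are $\{v,a_1,b_1\}$ and $\{v,a_2,b_2\}$, using new vertices $b_1,b_2$, which are distinct from each other and from $v,a_1,a_2,x$ by disjointness and since $b_i\in V'$. For the edge $a_1x$ I will choose a vertex $w_1\in N^\ast_{V(G)}(a_1x)$ avoiding the at most five already used vertices $\{v,a_2,b_1,b_2\}$ (and $a_1,x$ themselves are automatically excluded); since $d_{V(G)}(a_1x)\ge\tau=n/200\ge 6$, such $w_1$ exists. Then for $a_2x$ choose $w_2\in N^\ast_{V(G)}(a_2x)$ avoiding $\{v,a_1,b_1,b_2,w_1\}$, again possible since $\tau\ge 7$. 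The six vertices $b_1,b_2,w_1,w_2$ together with the cycle vertices are then all distinct, giving a $C_4^{\triangle}$ in $G$, contradicting $C_4^{\triangle}$-freeness.

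The only genuine subtlety, and the step I would check most carefully, is the case where only one of $a_1,a_2$ is forced into $D$ by the hypothesis in a different way—e.g. the $D$-vertex of $e_2$ might be the same label as in $e_1$'s role—but since the edges are disjoint and each contains a $D$-vertex, we can always name the $D$-vertices $a_1\in e_1$, $a_2\in e_2$ distinct, so the cycle $va_1xa_2v$ is a genuine $4$-cycle (using $v\ne x$ as $v\in V'$). The rest is the greedy disjointness count above, which works comfortably because $\tau$ is linear in $n$.
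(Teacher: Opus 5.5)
Your proof is correct and is essentially the paper's argument. The paper uses the same $4$-cycle $xv_1vv_2x$, lets the other endpoints of $e_1,e_2$ serve as the distinct co-neighbours of $v_1v$ and $vv_2$, and then applies Fact~\ref{FACT:partial-embedding} with $t=2$; your greedy choice of $w_1,w_2$ from co-neighbourhoods of size at least $\tau$ is that Fact carried out by hand. One trivial slip: the set $\{v,a_2,b_1,b_2\}$ you avoid has four elements, not five.
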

    \begin{proof}
    	We first introduce a fact, which can be proved via greedy algorithm and presents a sufficient condition for the expansion from a certain graph.
    	\begin{fact}\label{FACT:partial-embedding}
    		Let $F$ be a subgraph of $G$ with $m\ge 1$ edges
    		and $e_1 e_2 \dots e_m$ be an edge sequence of $F$.
    		If there exists $1\le t \le m$ such that
    		$$\mbox{ there exist distinct co-neighbors for $e_1, \dots, e_t$, and $d_{G}(e_j) \ge 3m$ for all $t< j \le m$, }$$
    		then $F^{\triangle}$ is also a subgraph of $G$.
    	\end{fact}
        Now, we come to the proof of Claim~\ref{CLAIM:GenTuran-cycle-P_2}.

        Suppose to the contrary that there exists $v\in V'$ and two edges $e_1,e_2\in L_G(v)\cap E(V')$ such that $v_i\in V(e_i)\cap D$ for $i=1,2$.
    	Notice that $xv_1vv_2x$ is a $4$ cycle.
    	By definition of $D$ and Fact~\ref{FACT:partial-embedding}, we can expand $xv_1vv_2x$ into a copy of $C_{4}^{\triangle}$~(the edge sequence $v_1v,vv_2,v_2x,xv_1$, with $t=2$ in Fact~\ref{FACT:partial-embedding}).
    \end{proof}
    \begin{claim}[{\cite[Claim~5.5]{LS23a}}]\label{CLAIM:GenTuran-cycle-Maxdeg}
        For every $v\in V'$,
        $%\begin{align*}
            \min\left\{|N_{G}(v) \cap D_1|, |N_{G}(v) \cap D_2|\right\}
            \le \sqrt{3\delta}n.
        $%\end{align*}
    \end{claim}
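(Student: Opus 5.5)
Argue by contradiction: suppose some $v\in V'$ has $|N_G(v)\cap D_1|>\sqrt{3\delta}n$ and $|N_G(v)\cap D_2|>\sqrt{3\delta}n$. We aim to find two disjoint edges in $L_G(v)\cap E(G[V'])$ that both meet $D$, which contradicts Claim~\ref{CLAIM:GenTuran-cycle-P_2}. The candidates are edges of $G[V_1,V_2]$ joining $A_1:=N_G(v)\cap D_1$ to $A_2:=N_G(v)\cap D_2$. Any such edge $ab$ forms a triangle $vab$, so $ab\in L_G(v)$. Its endpoints lie in $D\subseteq V'$. We also need $v\notin\{a,b\}$, which is automatic since $a,b\in N_G(v)$.

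The first step is to show that $G[A_1,A_2]$ has many edges. The bipartite graph $G[V_1,V_2]$ misses at most $|V_1||V_2|-|G[V_1,V_2]|\le n^2/4-(n^2/4-2\delta n^2)=2\delta n^2$ pairs, by~\eqref{equ:GenTuran-cycle-G[V1,V2]}. Hence
\[
|G[A_1,A_2]|\ge |A_1||A_2|-2\delta n^2>3\delta n^2-2\delta n^2=\delta n^2>0 .
\]

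The second step is to find two disjoint edges in $G[A_1,A_2]$. A bipartite graph with no two disjoint edges is a star, so it has at most $\max\{|A_1|,|A_2|\}\le n$ edges. Since $\delta n^2>n$ for large $n$, $G[A_1,A_2]$ contains two disjoint edges $e_1,e_2$. Both lie in $L_G(v)\cap E(G[V'])$ and both meet $D$, contradicting Claim~\ref{CLAIM:GenTuran-cycle-P_2}.

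The only delicate point is the constant: the product $|A_1||A_2|$ must strictly exceed the $2\delta n^2$ missing pairs. The threshold $\sqrt{3\delta}n$ gives exactly $3\delta n^2>2\delta n^2$. Everything else is routine.
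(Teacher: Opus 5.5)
Your proof is correct and is essentially the paper's argument. Both combine two facts: by Claim~\ref{CLAIM:GenTuran-cycle-P_2}, $G[A_1,A_2]$ has no two disjoint edges and hence at most $n$ edges; and by~\eqref{equ:GenTuran-cycle-G[V1,V2]}, at most $2\delta n^2$ pairs between $V_1$ and $V_2$ are missing. The paper derives the contradiction against the edge count, while you derive it against Claim~\ref{CLAIM:GenTuran-cycle-P_2}, and your use of strict inequalities in the negation is slightly more careful than the paper's ``at least''.
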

    \begin{proof}
    	Suppose to the contrary that there exists a vertex $v\in V'$ such that the set $N_i \coloneqq N_{G}(v) \cap D_i$ has size at least $\sqrt{3\delta}n$ for both $i \in \{1,2\}$.
    	Observe from Claim~\ref{CLAIM:GenTuran-cycle-P_2} that the induced bipartite graph $G[N_1, N_2]$ does not contain two disjoint edges (as otherwise those two disjoint edges together with $v$ will contradict Claim~\ref{CLAIM:GenTuran-cycle-P_2}).
    	Hence, $|G[N_1, N_2]| \le n$ and consequently,
    	$|G[V_1, V_2]| \le |V_1||V_2| - \left(|N_1||N_2| - |G[N_1, N_2]|\right)
    	\le {n^2}/{4} - (3\delta n^2 - n)
    	< {n^2}/{4} - 2\delta n^2,$
    	contradicting~\eqref{equ:GenTuran-cycle-G[V1,V2]}.
    \end{proof}
    \begin{claim}[{\cite[Claim~5.6]{LS23a}}]\label{CLAIM:GenTuran-bad-deg-max}
        For every $i\in \{1,2\}$ and $v\in V_i$,
        $%\begin{align*}
            |N_{G}(v) \cap D_i| \le 11\sqrt{\delta}n.
        $%\end{align*}
    \end{claim}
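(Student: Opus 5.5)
Suppose for contradiction that some $v\in V_i$ has $|N_G(v)\cap D_i|>11\sqrt{\delta}n$. The plan is to show that moving $v$ to $V_{3-i}$ increases $|G[V_1,V_2]|$, which contradicts the maximality of the bipartition $V_1\cup V_2$.

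The first step uses Claim~\ref{CLAIM:GenTuran-cycle-Maxdeg}. Since $\min\{|N_G(v)\cap D_1|,|N_G(v)\cap D_2|\}\le\sqrt{3\delta}n$ and $|N_G(v)\cap D_i|>11\sqrt{\delta}n>\sqrt{3\delta}n$, the minimum must be attained at $3-i$. Hence $|N_G(v)\cap D_{3-i}|\le \sqrt{3\delta}n$.

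The second step bounds the neighbours of $v$ on each side. By Claim~\ref{CLAIM:GenTuran-cycle-D-bar}, $|\overline{D}|\le 8\delta n$. So $v$ has at most
\[
|N_G(v)\cap V_{3-i}|\le |N_G(v)\cap D_{3-i}|+|\overline{D}|\le \sqrt{3\delta}n+8\delta n<2\sqrt{\delta}n
\]
neighbours in $V_{3-i}$, for $\delta$ small. On the other hand, it has more than $11\sqrt{\delta}n$ neighbours in $V_i$.

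The third step performs the switch. Moving $v$ from $V_i$ to $V_{3-i}$ changes $|G[V_1,V_2]|$ by
\[
|N_G(v)\cap V_i|-|N_G(v)\cap V_{3-i}|>11\sqrt{\delta}n-2\sqrt{\delta}n>0.
\]
This contradicts the choice of $V_1\cup V_2$ as maximizing $|G[V_1,V_2]|$.

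There is no serious obstacle; the only point to check is the arithmetic $\sqrt{3\delta}+8\delta<11\sqrt{\delta}$, which holds for small $\delta$. The argument does not even need the full constant $11$. Alternatively, one could argue via~\eqref{equ:GenTuran-cycle-G[V1,V2]}, but the maximality of the partition is the cleanest route.
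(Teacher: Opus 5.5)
Your proof is correct and essentially the paper's: it uses the same three ingredients, namely maximality of the bipartition, Claim~\ref{CLAIM:GenTuran-cycle-Maxdeg}, and $|\overline{D}|\le 8\delta n$. The only difference is the order. You apply Claim~\ref{CLAIM:GenTuran-cycle-Maxdeg} first to get few neighbours in $V_{3-i}$ and then contradict maximality, whereas the paper uses maximality first to force more than $11\sqrt{\delta}n-8\delta n$ neighbours in $D_{3-i}$ and then contradicts Claim~\ref{CLAIM:GenTuran-cycle-Maxdeg}.
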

    \begin{proof}
    	Suppose to the contrary that there exist $i\in \{1,2\}$ and $v\in V_i$ such that $|N_{G}(v) \cap D_i| > 11\sqrt{\delta}n$.
    	Then by the maximality of $G[V_1, V_2]$, we have $|N_{G}(v) \cap V_{3-i}| \ge |N_{G}(v) \cap V_i| \ge |N_{G}(v) \cap D_i| > 11\sqrt{\delta}n$ (since otherwise we can move $v$ from $V_i$ to $V_{3-i}$ and this will result in a bipartition of $G$ with larger number of edges between two parts).
    	By Claim~\ref{CLAIM:GenTuran-cycle-D-bar}, this implies that $|N_{G}(v) \cap D_{3-i}| \ge |N_{G}(v) \cap V_{3-i}|- |\overline{D}|
    	\ge 11\sqrt{\delta}n- 8\delta n
    	> 3\sqrt{\delta} n$, contradicting Claim~\ref{CLAIM:GenTuran-cycle-Maxdeg}.
    	%contradicting Claim~\ref{Appendix:CLAIM:GenTuran-cycle-Maxdeg}.
    \end{proof}
    \begin{claim}[{\cite[Claim~5.10]{LS23a}}]\label{CLAIM:GenTuran-cycle-D'-in-D}
    	We have $|\overline{D'}_i| \le 14\delta n$ for $i\in \{1,2\}$,
    	and $|\overline{D'}| \le 28\delta n$.
    \end{claim}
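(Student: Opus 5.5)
The plan is a double count of the edges missing between $V_1$ and $V_2$, with the lower bound on $|G[V_1,V_2]|$ sharpened through the link of $x$. \textbf{I do not yet have a complete argument for the stated constant $14$.} The crude route (Step 2 below) gives only about $20\delta n$, and I could not close the gap with certainty.

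\textbf{Step 1 (each bad vertex misses many cross edges).} Fix $i\in\{1,2\}$ and $v\in\overline{D'}_i$. By definition, $|N_G(v)\cap V_{3-i}|<\tau'=2n/5$. By \eqref{equ:GenTuran-cycle-Vi-size}, $|V_{3-i}|\ge(\frac12-\sqrt{2\delta})n$. Hence $v$ has at least
\[
\left(\tfrac{1}{10}-\sqrt{2\delta}\right)n
\]
non-neighbours in $V_{3-i}$. These missing pairs are distinct for distinct $v$, since each pair has exactly one endpoint in $V_i$. Therefore
\[
|\overline{D'}_i|\left(\tfrac{1}{10}-\sqrt{2\delta}\right)n\le |V_1||V_2|-|G[V_1,V_2]|.
\]

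\textbf{Step 2 (the crude bound and why it falls short).} Feeding in \eqref{equ:GenTuran-cycle-G[V1,V2]} and $|V_1||V_2|\le n^2/4$ bounds the right-hand side by $2\delta n^2$. This gives $|\overline{D'}_i|\lesssim 20\delta n$, which is not enough. To reach $14\delta n$, the total number of missing cross pairs must be at most about $1.4\delta n^2 - O(\sqrt{\delta})\delta n^2$. Shaving $(\frac12-\sqrt{2\delta})$ to $(\frac12-2\sqrt\delta)$ or using the size imbalance $(|V_1|-|V_2|)^2/4$ does not help, because these corrections enter only at order $\sqrt{\delta}\cdot\delta n^2$.

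\textbf{Step 3 (intended sharpening, via the link of $x$).} I would use that $N(K_3,G)\ge N(K_3,S^=(n))=n^2/4+O(n)$. By Lemma~\ref{THM:GenTuran-Cycle-Stability}(iii), $N(K_3,G-x)\le\delta n^2$. Since triangles through $x$ are in bijection with edges of $L_G(x)$, this gives
\[
|L_G(x)|\ge \frac{n^2}{4}-\delta n^2-O(n).
\]
Write $L_G(x)$ as the cross edges in $L_G(x)\cap G[V_1,V_2]$ together with the link edges lying inside $V_1$ or inside $V_2$. I would then try to show that the inside-part link edges number at most about $0.4\delta n^2$. If so, $|G[V_1,V_2]|\ge n^2/4-1.4\delta n^2$, and Step 1 gives
\[
|\overline{D'}_i|\le \frac{1.4\,\delta n^2}{(1/10-\sqrt{2\delta})n}\le 14\delta n,
\]
as $\delta$ is small.

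The argument for the inside-part bound would run as follows. Let $uw\in L_G(x)$ with $u,w\in V_1$ and at least one endpoint, say $u$, in $D_1$. A second link edge $u'w'$ disjoint from $uw$ and meeting $D$ can be joined through common neighbours in $V_2$ to form a $4$-cycle through $x$. One would expand it by Fact~\ref{FACT:partial-embedding}, as in the proof of Claim~\ref{CLAIM:GenTuran-cycle-P_2}. Edges inside $V_i$ touching $\overline{D}$ contribute at most $|\overline{D}|n\le 8\delta n^2$ by Claim~\ref{CLAIM:GenTuran-cycle-D-bar}; this is too large, so these would have to be charged instead against $\mathcal{M}_1$.

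\textbf{Main obstacle.} The hard step is exactly this bound of order $0.4\delta n^2$ on link edges inside the parts. Claim~\ref{CLAIM:GenTuran-bad-deg-max} alone gives only $O(\sqrt\delta\, n^2)$, which is far too weak. If the sharpening cannot be made to work, the stated bound $|\overline{D'}_i|\le 14\delta n$ remains unproved by this plan. Once it is established for each $i$, the second assertion $|\overline{D'}|\le 28\delta n$ follows by summing over $i\in\{1,2\}$.
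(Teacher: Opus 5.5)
Your Steps~1--2 are exactly the paper's argument, and your diagnosis of them is correct. The paper's proof asserts $|G[V_1,V_2]|\le |V_1||V_2|-14\delta n\,(\frac n2-\sqrt{2\delta}n-\frac n3)<\frac{n^2}{4}-2\delta n^2$. That is, it computes the per-vertex deficit with threshold $n/3$, although $\tau'=2n/5$. With the actual $\tau'$ the deficit is $(\frac1{10}-\sqrt{2\delta})n$, and $14\delta n\cdot\frac{n}{10}=1.4\delta n^2$ does not exceed $2\delta n^2$. So the double count alone gives only about $20\delta n$. Nor can $\tau'$ simply be lowered: the second inclusion--exclusion in the proof of Claim~\ref{CLAIM:GenTuran-cycle-GD1'D2'} needs $\tau'>3n/8$, and with such $\tau'$ the crude count never gets below $16\delta n$. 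So the constant $14$ needs an extra ingredient, and, as you say, your proposal does not supply it. That is the gap.

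The missing ingredient is the bound on edges inside the parts, and it is already available. Your Step~3 estimates were too pessimistic: you applied Claim~\ref{CLAIM:GenTuran-bad-deg-max} to all vertices, or used degree $n$ for vertices of $\overline D$. Instead, split the edges inside $V_i$ into two kinds.
\begin{itemize}
\item Edges with both ends in $D_i'$: there is at most one, by Claim~\ref{CLAIM:GenTuran-cycle-GD1'D2'}. Its proof uses only the definition of $D_i'$, \eqref{equ:GenTuran-cycle-Vi-size} and Claim~\ref{CLAIM:GenTuran-cycle-P_2}, so there is no circularity.
\item Every other edge has an endpoint in $\overline D_i\cup\overline{D'}_i$. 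By Claims~\ref{CLAIM:GenTuran-bad-deg-max} and~\ref{CLAIM:GenTuran-cycle-D-bar}, each vertex of $V_i$ has at most $11\sqrt\delta n+|\overline D_i|\le 12\sqrt\delta n$ neighbours in $V_i$.
\end{itemize}
Hence
$|G[V_1]|+|G[V_2]|\le 2+12\sqrt\delta n(|\overline D|+|\overline{D'}|)\le 2+96\delta^{3/2}n^2+12\sqrt\delta n|\overline{D'}|$.

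Now use Lemma~\ref{THM:GenTuran-Cycle-Stability}(i) directly; you need neither the link of $x$ nor (iii). This gives
$|V_1||V_2|-|G[V_1,V_2]|\le \delta n^2+96\delta^{3/2}n^2+2+12\sqrt\delta n|\overline{D'}|$.
Put $a=\max_i|\overline{D'}_i|$, so $|\overline{D'}|\le 2a$. Your Step~1 then gives $(\frac1{10}-26\sqrt\delta)na\le(1+96\sqrt\delta)\delta n^2+2$. Hence $a\le 11\delta n$ for small $\delta$ and large $n$, which proves both assertions of the claim with room to spare.

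For the rest of the paper the constant $14$ plays no role. An $O(\delta n)$ bound, such as the crude $21\delta n$ from Steps~1--2, would serve there as well.
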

    \begin{proof}
    	Let $i\in \{1,2\}$.
    	If $|\overline{D'}_i| > 14\delta n$, then
    	$|G[V_1, V_2]| \le |V_1||V_2| - 14\delta n \times ({n}/{2}- \sqrt{2\delta}n - {n}/{3})
    	$ $ < {n^2}/{4} - 2\delta n^2$ by~\eqref{equ:GenTuran-cycle-Vi-size},
    	which contradicts~\eqref{equ:GenTuran-cycle-G[V1,V2]}.
    \end{proof}
    For $i\in \{0, 1,2,3\}$, let $\mathcal{B}_i \coloneqq \left\{K_3\in \mathcal{B} \colon |K_3 \cap \overline{D}| = i\right\}$ and $\mathcal{B}[D]\subseteq \mathcal{B}_0$ with each triangle composed by vertices in $D$.
    It follows~\eqref{equ:upper bound Cktriangle} that
    \begin{align}\label{equ:GenTuran-cycle-B3}
        |\mathcal{B}_3| \le C |\overline{D}|^2.
    \end{align}
    \begin{claim}[{\cite[Claim~5.8]{LS23a}}]\label{CLAIM:GenTuran-cycle-B_2'}
        We have $|\mathcal{B}_2| \le {n |\overline{D}|}/{3}$.
        % \begin{align*}
        %     |\mathcal{B}_2|
        %     \le \frac{n |\overline{D}|}{3}.
        % \end{align*}
    \end{claim}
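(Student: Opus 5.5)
Every triangle in $\mathcal{B}_2$ has exactly two vertices in $\overline{D}$ and one vertex in $D$. Since $x\in D$ is impossible ($D\subseteq V'$), the third vertex is either $x$ or a vertex of $D$. I will count these triangles by the pair $\{u,w\}\subseteq \overline{D}$ they contain and the third vertex, and assign each triangle to one of its two $\overline{D}$-vertices. It then suffices to show that each $u\in\overline{D}$ is charged at most about $n/3$ triangles in total (up to lower-order terms). More precisely, I aim to show that for each $u\in\overline{D}$ the number of triangles $uwz$ with $w\in\overline{D}$ and $z\in D\cup\{x\}$ is at most $2n/3$. Summing over $u$ counts each triangle twice, which gives $|\mathcal{B}_2|\le n|\overline{D}|/3$.

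\emph{Triangles through $x$.} Triangles of the form $xuw$ with $u,w\in\overline{D}$ number at most $\binom{|\overline{D}|}{2}\le 4\delta n|\overline{D}|$ by Claim~\ref{CLAIM:GenTuran-cycle-D-bar}, which is negligible.

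\emph{Triangles with $z\in D$.} Fix $u\in\overline{D}$. The triangles $uwz$ with $w\in\overline{D}$ and $z\in D$ correspond to edges $wz$ in $L_G(u)$ that meet $D$. By Claim~\ref{CLAIM:GenTuran-cycle-P_2}, applied with $v=u$, any two such edges of $L_G(u)\cap E(G[V'])$ intersect. A family of pairwise intersecting edges is either a star or a triangle. In the triangle case there are at most $3$ edges. In the star case, let the center be $c$; there are two subcases.
\begin{enumerate}[label=(\alph*)]
\item If $c\in D$, the edges are $cw$ with $w\in\overline{D}\cap N_G(u)$, so there are at most $|\overline{D}|\le 8\delta n$ of them.
\item If $c\in\overline{D}$ (so $c=w$), the edges are $wz$ with $z\in D\cap N_G(u)\cap N_G(w)$. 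This number is at most $|N_G(u)\cap D|$. By Claims~\ref{CLAIM:GenTuran-cycle-Maxdeg} and~\ref{CLAIM:GenTuran-bad-deg-max}, $|N_G(u)\cap D|\le |N_G(u)\cap D_1|+|N_G(u)\cap D_2|\le \sqrt{3\delta}n+11\sqrt\delta n$. This follows because $u$ lies in some $V_i$, which bounds its neighbourhood in $D_i$ by Claim~\ref{CLAIM:GenTuran-bad-deg-max}, and then the minimum in Claim~\ref{CLAIM:GenTuran-cycle-Maxdeg} is taken either by the same side or the bound is immediate.
\end{enumerate}

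One point needs care. Claim~\ref{CLAIM:GenTuran-bad-deg-max} bounds the neighbourhood in $D_i$, but for the other side $D_{3-i}$ we only know that the minimum over the two sides is small. That is enough here: if the minimum is attained on side $D_{3-i}$, then both sides are bounded, namely side $i$ by $11\sqrt\delta n$ and side $3-i$ by $\sqrt{3\delta}n$. If instead the minimum is attained on side $i$, the star with center $w$ can still have up to about $n/2$ leaves in $D_{3-i}$. This is the main obstacle. Here the true counting bound must come from the structure: a single star of size at most $|V_{3-i}|\le(1/2+\sqrt{2\delta})n$ per vertex $u$, realized by at most one such partner $w$. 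So the charge per $u$ is at most $n/2+O(\sqrt\delta n)$, not $2n/3$.

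To reach $n/3$, I would pair the charges more carefully. Each triangle $uwz$ is charged to both $u$ and $w$, and in the star case the same star $\{wz\}$ seen from $u$ is also the star $\{uz\}$ seen from $w$. So the pair $\{u,w\}$ accounts for at most $n/2+o(n)$ triangles, and each $u$ has at most one partner $w$ with a large star. The vertices of $\overline{D}$ therefore split into matched pairs, each contributing at most $(1/2+o(1))n$ triangles. This gives $|\mathcal{B}_2|\le (1/4+o(1))n|\overline{D}|+O(\delta n|\overline{D}|)\le n|\overline{D}|/3$ for $\delta$ small, which is the claim.
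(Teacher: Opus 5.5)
Your argument is correct and is essentially the paper's. Claim~\ref{CLAIM:GenTuran-cycle-P_2} forces the pairs $\{u,w\}\subseteq\overline{D}$ with at least two co-neighbours in $D$ to form a matching (the paper's set $F_2$); each such pair gives at most $|N_G(u)\cap D|\le n/2+O(\sqrt{\delta})n$ triangles, and the remaining triangles of $\mathcal{B}_2$ (through $x$, or via pairs with a single co-neighbour) number $O(|\overline{D}|^2)$. Note that the bound $(\sqrt{3}+11)\sqrt{\delta}n$ you first state in case (b) is false, as you then observe; however, the corrected per-vertex bound $n/2+O(\sqrt{\delta})n$ already lies below your own $2n/3$ target, so your double count finishes the proof directly and the final pairing paragraph is not needed.
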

    \begin{proof}
    	Let us partition $\mathcal{B}_2$ into two sets $\mathcal{B}_2'$ and $\mathcal{B}_2''$, where
    	$$\mathcal{B}_2' \coloneqq \left\{K_3\in \mathcal{B}_2 \colon K_3 \cap D \neq \emptyset\right\}
    	\text{ and }
    	\mathcal{B}_2'' \coloneqq \mathcal{B}_2\setminus \mathcal{B}_2'. $$
    	From the definition of $\mathcal{B}_2''$, each triangle in $\mathcal{B}_2''$ is determined by an edge in links $L_{\overline{D}}(x)$.
    	Then, we have $|\mathcal{B}_2''| \le |\overline{D}|^2.$

    	From the definition of $\mathcal{B}_2'$, each triangle is composed by a vertex in $D$ and two vertices in $\overline{D}$.
    	Let $B_2'$ be the graph spanned by edges in triangles of $\mathcal{B}_2'$,
    	\begin{align*}
    		F \coloneqq E(\overline{D}) \cap E({B}_2'),
    		\text{ }
    		F_1 \coloneqq \left\{e\in F \colon d_{{B}_2'}(e) = 1\right\}
    		\text{ and }
    		F_2 \coloneqq F\setminus F_1.
    	\end{align*}
    	It is clear that $F_1$ contributes at most $|\overline{D}|^{2}$ triangles to $\mathcal{B}_2'$.
    	On the other hand, $F_2$ is a matching in $\overline{D}$ by Claim~\ref{CLAIM:GenTuran-cycle-P_2} (as $N_{B_2'}^{\ast}(e)\in D$ for any $e\in F$).
    	By Claim~\ref{CLAIM:GenTuran-cycle-Maxdeg} and~\eqref{equ:GenTuran-cycle-Vi-size}, we obtain
    	$|\mathcal{B}_2'|
    	\le |\overline{D}|^{2} + \left(\max\{|D_1|,\ |D_2|\}+\sqrt{3\delta}n\right) \times \frac{|F_2|}{2}
    	\le |\overline{D}|^2+ \left(\frac{n}{4}+5\sqrt{\delta}n\right)|\overline{D}|.$
    	
    	Therefore, by Claim~\ref{CLAIM:GenTuran-cycle-D-bar},
    	$$|\mathcal{B}_2| = |\mathcal{B}_2''|+|\mathcal{B}_2'|
    	\le |\overline{D}|^2+ |\overline{D}|^2 + \left(\frac{n}{4}+5\sqrt{\delta}n\right)|\overline{D}|
    	\le \left(16\delta n + \frac{n}{4}+\frac{5}{2}\sqrt{\delta}n\right) |\overline{D}|
    	\le \frac{n |\overline{D}|}{3}.$$
    	This proves Claim~\ref{CLAIM:GenTuran-cycle-B_2'}.
    \end{proof}
Now, we prove several key claims to demonstrate the structural differences compared with the results in~\cite{LS23a}.
    \begin{claim}%[{\cite[Claim~5.9]{LS23a}}]
    	\label{CLAIM:GenTuran-cycle-B1}
        We have $|\mathcal{B}_1| + |\mathcal{B}[D]|
        \le 11\sqrt{\delta} n |\overline{D}| + n-1$.
    \end{claim}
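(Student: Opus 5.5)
We bound the two families separately. $|\mathcal{B}_1|$ will be charged to the vertices of $\overline{D}$, giving the $11\sqrt{\delta}n|\overline{D}|$ term. $|\mathcal{B}[D]|$ will be shown to be at most $n-1$ using the $C_4^{\triangle}$-freeness inside $D$.

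\emph{Bounding $\mathcal{B}_1$.} A triangle of $\mathcal{B}_1$ contains exactly one vertex $u\in\overline{D}$, and its other two vertices $w,z$ lie in $D\cup\{x\}$.

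If $x\in\{w,z\}$, say $w=x$, then $z\in D$. Since the triangle is not in $\mathcal{S}$, $u$ and $z$ lie in the same part $V_i$. By Claim~\ref{CLAIM:GenTuran-bad-deg-max}, for fixed $u$ there are at most $11\sqrt{\delta}n$ such $z$.

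Otherwise $w,z\in D$ and $uwz$ is a triangle avoiding $x$, so $wz\in L_G(u)\cap E(G[V'])$ with both endpoints in $D$. By Claim~\ref{CLAIM:GenTuran-cycle-P_2}, the link of $u$ inside $D$ has no two disjoint edges, so it is a star or a triangle. Hence $u$ lies in $O(1)$ such triangles plus a star's worth of them. A star centred at $c$ contributes at most $|N_G(u)\cap N_G(c)\cap D|$ triangles. Using Claims~\ref{CLAIM:GenTuran-cycle-Maxdeg} and \ref{CLAIM:GenTuran-bad-deg-max}, the leaves on one side are at most $\sqrt{3\delta}n$ and the leaves on the same side as $u$ are at most $11\sqrt{\delta}n$. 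This is the step needing care. I would handle it as follows: if the star has more than $4$ leaves, then $uc$ lies in many triangles through $D$-vertices. Together with $d(cx)\ge\tau$ and $d(ux)$, one can then build a $C_4$ such as $x\,c\,\ell\,u\,x$ that expands via Fact~\ref{FACT:partial-embedding}. If that fails, I would refine the counting so that the two cases together give at most $11\sqrt{\delta}n$ triangles per $u$ in total.

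\emph{Bounding $\mathcal{B}[D]$.} A triangle of $\mathcal{B}[D]$ lies in $D$ and does not contain $x$ (since $x\notin D$), so it has an edge inside some $V_i$. Take two such triangles $T_1,T_2$. Every vertex $v\in D$ has $d(vx)\ge\tau$, so every edge $vx$ with $v\in D$ can be expanded greedily. Two vertex-disjoint triangles, or two triangles sharing only one vertex, in $\mathcal{B}[D]$ then give a $C_4$ through $x$ of the form $x\,a\,b\,c\,x$ with $ab$, $bc$ in distinct triangles, which expands by Fact~\ref{FACT:partial-embedding} with $t=2$. This is the same argument as Claim~\ref{CLAIM:GenTuran-cycle-P_2}, applied with $v=b$ and disjoint link edges. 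Hence any two triangles of $\mathcal{B}[D]$ share an edge. A family of triangles pairwise sharing an edge is either contained in a $K_4$ (at most $4$ triangles) or is $2$-intersecting around a common edge. In the latter case \eqref{2-intersecting} gives at most $|D|-2\le n-1$ triangles.

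\emph{Combining.} Adding the two bounds gives $|\mathcal{B}_1|+|\mathcal{B}[D]|\le 11\sqrt{\delta}n|\overline{D}|+n-1$, where any lower-order slack from the star case in $\mathcal{B}_1$ is absorbed using $|\overline{D}|\le 8\delta n$. The main obstacle is the star case of $\mathcal{B}_1$, where the per-vertex bound has to be sharp enough to fit inside $11\sqrt{\delta}n$.
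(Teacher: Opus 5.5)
\textbf{There is a genuine gap.} Your bound for the triangles of $\mathcal{B}_1$ through $x$ is correct, and it is exactly the paper's bound of $11\sqrt{\delta}n|\overline{D}|$ via Claim~\ref{CLAIM:GenTuran-bad-deg-max}. Both remaining steps fail, however.

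\emph{The step for $\mathcal{B}[D]$.} Your assertion that any two triangles of $\mathcal{B}[D]$ share an edge is false. Claim~\ref{CLAIM:GenTuran-cycle-P_2} only rules out two triangles meeting in exactly one vertex: your cycle $x\,a\,b\,c\,x$ needs $b$ to lie in both triangles. It says nothing about vertex-disjoint triangles, and these occur even in the extremal graph. In $S^=(n)$ with $V_1=X$, $V_2=Y$, the triangles $e_1\cup\{w\}$ ($w\in Y\setminus V(e_2)$) and $e_2\cup\{w'\}$ ($w'\in X\setminus V(e_1)$) all lie in $\mathcal{B}[D]$ and form two vertex-disjoint books. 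So neither the conclusion ``one book or one $K_4$'' nor the bound $|D|-2$ is justified. What Claim~\ref{CLAIM:GenTuran-cycle-P_2} actually gives is a decomposition into vertex-disjoint clusters. Each cluster is a $2$-intersecting graph or lies inside a $K_4$, and each has at most as many triangles as vertices.

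\emph{The step for triangles $u\,w\,z$ with $u\in\overline{D}$ and $w,z\in D$.} Call these triangles of $\mathcal{B}_1$ avoiding $x$ $\mathcal{B}_1''$. Counting them per vertex cannot reach the stated bound, for three reasons.
\begin{itemize}
\item The triangles through $x$ already use the whole $11\sqrt{\delta}n$ per $u$, so any further per-$u$ count overshoots $11\sqrt{\delta}n|\overline{D}|$.
\item You never show that the star contributes $O(1)$ per $u$; Claims~\ref{CLAIM:GenTuran-cycle-Maxdeg} and~\ref{CLAIM:GenTuran-bad-deg-max} give at best an order-$\sqrt{\delta}n$ bound.
\item The cycle $x\,c\,\ell\,u\,x$ is not available: for $u\in\overline{D}$ the pair $ux$ need not be an edge, and the only known co-neighbours of $c\ell$ and $\ell u$ (namely $u$ and $c$) lie on the cycle.
\end{itemize}
Nor does the $n-1$ term help: your $|D|-2$ bound leaves slack $|\overline{D}|+2$, which absorbs about one triangle per $u$, not a whole star.

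\emph{The missing idea.} Stop counting $\mathcal{B}_1''$ per vertex and merge it with $\mathcal{B}[D]$. Every triangle of $\mathcal{B}_1''\cup\mathcal{B}[D]$ lies in $V'$ and has at least two vertices in $D$. So two of them meeting in exactly one vertex $v$ would give two disjoint edges of $L_G(v)\cap E(G[V'])$ meeting $D$, contradicting Claim~\ref{CLAIM:GenTuran-cycle-P_2}. Hence this family splits into vertex-disjoint $2$-intersecting graphs and sub-$K_4$'s inside $V'$, and \eqref{2-intersecting} gives $|\mathcal{B}_1''|+|\mathcal{B}[D]|\le |V'|=n-1$. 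Adding your bound for the triangles through $x$ completes the claim.
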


    \begin{proof}
        Let us partition $\mathcal{B}_1$ into two subsets $\mathcal{B}_1'$ and $\mathcal{B}_1''$, where
        \begin{align*}
            \mathcal{B}_1'
            \coloneqq \left\{K_3\in \mathcal{B}_1 \colon x\in K_3\right\}
            \text{ and }
            \mathcal{B}_1''\coloneqq \mathcal{B}_1\setminus \mathcal{B}_1'.
        \end{align*}
        It follows from Claim~\ref{CLAIM:GenTuran-bad-deg-max} that $|\mathcal{B}_1'|
        \le |\overline{D}| \times 11\sqrt{\delta}n
        = 11\sqrt{\delta} n |\overline{D}|.$
        Now we consider triangles in $\mathcal{B}_1''$ and $\mathcal{B}[D]$.
        From the definition of $\mathcal{B}_1''$, each triangle in $\mathcal{B}_1''$ contains exactly one vertex in $\overline{D}$ and two vertices in $D$.
        Observe from Claim~\ref{CLAIM:GenTuran-cycle-P_2}, there will not exist two triangles in $\mathcal{B}_1''\cup \mathcal{B}[D]$ such that those two triangles intersects exactly one vertex.
        Then each triangle in $\mathcal{B}_1''\cup \mathcal{B}[D]$ is contained in vertex disjoint parts, where each part is composed by a copy of $K_4$ or $2$-intersecting graph.
        Since each copy of $K_4$ contains exactly $4$ triangles, by~\eqref{2-intersecting}, we have $|\mathcal{B}_1''| + |\mathcal{B}[D]|\le |D|+ |\overline{D}|$.
        Hence,
        $$|\mathcal{B}_1| + |\mathcal{B}[D]|
        = |\mathcal{B}_1'| + \left(|\mathcal{B}_1''| + |\mathcal{B}[D]| \right)
        \le 11\sqrt{\delta}  n |\overline{D}| + \left(|D|+ |\overline{D}|\right)
        \le 11\sqrt{\delta} n |\overline{D}| + n-1,
        $$
        here we use the fact that $D\cup \overline{D}\subseteq V'$.
        \end{proof}

        \begin{claim}\label{CLAIM:GenTuran-cycle-GD1'D2'}
            We have $\max\{|G[D_1']|,|G[D_2']|\} \le 1$.
        \end{claim}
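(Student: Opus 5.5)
The plan is to argue by contradiction and reduce everything to Claim~\ref{CLAIM:GenTuran-cycle-P_2}. That claim forbids a vertex whose link $L_G(\cdot)\cap E(G[V'])$ contains two disjoint edges each meeting $D$. By symmetry it suffices to treat $D_1'$. Suppose $G[D_1']$ has two distinct edges. Since $D_1'\subseteq D$, every edge inside $D_1'$ meets $D$. The key quantitative input is that each vertex of $D_1'$ has at least $\tau'=2n/5$ neighbours in $V_2$. Meanwhile $|V_2|\le (1/2+\sqrt{2\delta})n$ by~\eqref{equ:GenTuran-cycle-Vi-size}. So common neighbourhoods in $V_2$ of a few vertices of $D_1'$ are large. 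Note also that $V_2\subseteq V'$, so any vertex we pick there is a legitimate apex for Claim~\ref{CLAIM:GenTuran-cycle-P_2}.

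\emph{Case 1: the two edges $uv$ and $u'v'$ are disjoint.} By inclusion--exclusion,
\[
\bigl|N_G(u)\cap N_G(v)\cap N_G(u')\cap N_G(v')\cap V_2\bigr| \ge 4\cdot\tfrac{2n}{5}-3\bigl(\tfrac12+\sqrt{2\delta}\bigr)n \ge \tfrac{n}{20}>0 .
\]
Hence there is a vertex $w\in V_2$ adjacent to all four of $u,v,u',v'$. Then $wuv$ and $wu'v'$ are triangles of $G$. So $uv,u'v'\in L_G(w)\cap E(G[V'])$ are two disjoint edges, each meeting $D$. This contradicts Claim~\ref{CLAIM:GenTuran-cycle-P_2} applied to $w$.

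\emph{Case 2: the two edges share a vertex, say $uv$ and $vv'$.} Here the apex will be the shared vertex $v$ itself, which lies in $V'$. The sets $N_G(u)\cap N_G(v)\cap V_2$ and $N_G(v)\cap N_G(v')\cap V_2$ each have size at least $4n/5-(1/2+\sqrt{2\delta})n\ge n/4$. So we may choose distinct $w\in N_G(u)\cap N_G(v)\cap V_2$ and $z\in N_G(v)\cap N_G(v')\cap V_2$. Then $vuw$ and $vv'z$ are triangles, so $uw,v'z\in L_G(v)\cap E(G[V'])$. These edges are disjoint, since $u,v'\in V_1$, $w,z\in V_2$, $u\ne v'$ and $w\ne z$. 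Each meets $D$, because $u,v'\in D$. This again contradicts Claim~\ref{CLAIM:GenTuran-cycle-P_2}.

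The case split is exhaustive, so $|G[D_i']|\le 1$ for $i=1,2$. The only real obstacle I expect is the shared-vertex case. There the natural choice of apex, a common neighbour in $V_2$, yields two links that intersect at $v$, so the apex must be switched to $v$ and the distinctness of $w$ and $z$ checked. The remaining counting is routine, using that $\delta$ is small.
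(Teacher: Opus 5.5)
Your proof is correct and is essentially the paper's argument. The paper makes the same case split: for two edges sharing a vertex $v$, it takes $v$ itself as the apex, using two distinct common neighbours in $V_2$; for disjoint edges, it takes as apex a common neighbour in $V_2$ of all four endpoints, found by inclusion--exclusion with the same bound $n/10-3\sqrt{2\delta}\,n$. Both cases then contradict Claim~\ref{CLAIM:GenTuran-cycle-P_2}, and your explicit check that $uw$ and $v'z$ are disjoint fills in a detail the paper leaves implicit.
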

        \begin{proof}
            Suppose that $uv$ is an edge in $G[D_i']$.
            Then it follows from the definition of $D_i'$ that $\min\left\{|N_{G}(u)\cap V_{3-i}|, |N_{G}(v)\cap V_{3-i}|\right\} \ge 2n/5$.
            Due to~\eqref{equ:GenTuran-cycle-Vi-size} and the Inclusion-exclusion principle, we have
            \begin{align}\label{equ:co-degree}
                |V_{3-i} \cap N_{G}(u) \cap N_{G}(v)|
                \ge 2\times \frac{2}{5}n - \left(\frac{n}{2}+\sqrt{2\delta}n\right)
                = \frac{3}{10}n-\sqrt{2\delta}n.
            \end{align}
            Suppose that there exist two distinct edges $e_1', e_2' \in E(G[D_1']) \subseteq E(G[D_1])$.
            If $e_1'$ intersects $e_2'$ at $v$, then $L_G(v)$ would contradict Claim~\ref{CLAIM:GenTuran-cycle-P_2} by~\eqref{equ:co-degree}.
            Otherwise, it follows from~\eqref{equ:co-degree} and the Inclusion-exclusion principle again that $$|N_{G}^{\ast}(e_1')\cap N_{G}^{\ast}(e_2')\cap V_{3-i}|\ge 2\times \left(\frac{3}{10}n - \sqrt{2\delta}n\right) - \left(\frac{n}{2} + \sqrt{2\delta}n\right)
            = \frac{n}{10} - 3\times\sqrt{2\delta}n
            \ge \frac{n}{20},$$
            which would contradict Claim~\ref{CLAIM:GenTuran-cycle-P_2}.
        \end{proof}
        Observe from the proof of Claim~\ref{CLAIM:GenTuran-cycle-GD1'D2'}, it can be verified that
        \begin{equation}\label{equ:GenTuran-Di-triangle-free}
        \mbox{both $G[D_1']$ and $G[D_2']$ contain no triangle.}
        \end{equation}
        Moreover, we can obtain the following
        \begin{claim}\label{GenTuran-no-D1,D2-edge}
        	If $e_1$ is an edge in $G[D_1']$ and $e_2$ is an edge in $G[D_2']$, then there is no triangle composed by $e_i$ and a vertex in $e_{3-i}$ for $i\in \{1,2\}$.
        \end{claim}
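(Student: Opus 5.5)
Write $e_1=u_1v_1\in G[D_1']$ and $e_2=u_2v_2\in G[D_2']$. Suppose, for a contradiction, that some triangle is formed by $e_1$ and a vertex of $e_2$; by symmetry take it to be $u_1v_1u_2$. The plan is to find a $C_4$ through $u_1v_1u_2$ and at least one further vertex, and to expand it into a copy of $C_4^{\triangle}$ via Fact~\ref{FACT:partial-embedding}. This contradicts the assumption that $G$ is $C_4^{\triangle}$-free.

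The main tool is the large common neighbourhoods that come with $D'$. By~\eqref{equ:co-degree}, the edge $e_1$ has at least $\frac{3}{10}n-\sqrt{2\delta}n$ co-neighbours in $V_2$. Similarly, $e_2$ has at least that many co-neighbours in $V_1$. Since $u_2\in D_2'$, the vertex $u_2$ has at least $2n/5$ neighbours in $V_1$. Since $u_1,v_1\in D_1'$, each of them has at least $2n/5$ neighbours in $V_2$. Finally, each of $u_1,v_1,u_2,v_2$ lies in $D$, so $d_{V(G)}(vx)\ge\tau=n/200$ for each of these vertices $v$.

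The cleanest attempt is the $4$-cycle $x u_1 u_2 v_1 x$, built from the edges $u_1u_2$ and $u_2v_1$, which exist because $u_1v_1u_2$ is a triangle. Take the edge sequence $u_1u_2,\ u_2v_1,\ v_1x,\ xu_1$. We need distinct co-neighbours for the first two edges, and $d_G(\cdot)\ge 12$ for the last two. The last two edges satisfy this because $u_1,v_1\in D$. For the first two edges, I would show that $u_1u_2$ and $u_2v_1$ each have many co-neighbours. The common neighbours of $u_1$ and $u_2$ in $V_2$ and in $V_1$ are not obviously present, so it is easier to use specific vertices. The vertex $v_1$ itself is a co-neighbour of $u_1u_2$, and $u_1$ is a co-neighbour of $u_2v_1$. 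These cannot be used, since they lie on the cycle.

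A better route is to use $L_G(u_2)$ together with Claim~\ref{CLAIM:GenTuran-cycle-P_2}. The edge $e_1=u_1v_1$ lies in $L_G(u_2)$ and meets $D$. The vertex $u_2$ is adjacent to $v_2$, and the edge $e_2$ has at least $\frac{3}{10}n-\sqrt{2\delta}n$ co-neighbours in $V_1$. Choose such a co-neighbour $w\in V_1\setminus\{u_1,v_1\}$. Then $wv_2\in L_G(u_2)$ is an edge that meets $D$ (since $v_2\in D$) and is disjoint from $u_1v_1$. This gives two disjoint edges of $L_G(u_2)\cap E(G[V'])$, both meeting $D$, which contradicts Claim~\ref{CLAIM:GenTuran-cycle-P_2} directly. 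The symmetric case, a triangle formed by $e_2$ and a vertex of $e_1$, is handled in the same way, using a co-neighbour of $e_1$ in $V_2$.

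The only point needing care is choosing $w$ distinct from $u_1$ and $v_1$. This is immediate because the co-neighbourhood has linear size. There is a subtlety that must also be checked: the edges must lie in $G[V']$, which holds because $x\notin\{w,v_2,u_1,v_1\}$. I therefore expect essentially no obstacle. The argument is a direct application of~\eqref{equ:co-degree} and Claim~\ref{CLAIM:GenTuran-cycle-P_2}.
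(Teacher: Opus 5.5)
Your proposal is correct and matches the paper's proof. You use \eqref{equ:co-degree} to pick a co-neighbour $w\in V_1\setminus\{u_1,v_1\}$ of $u_2v_2$. Then $u_1v_1$ and $wv_2$ are two disjoint edges in the link of $u_2$ that both meet $D$, which contradicts Claim~\ref{CLAIM:GenTuran-cycle-P_2}; the exploratory $4$-cycle attempt in your third paragraph is not needed.
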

        \begin{proof}
        	Let $e_1=u_1v_1\subseteq E(G[D_1'])$, $e_2=u_2v_2\subseteq E(G[D_2'])$ and $v_1u_1u_2v_1$ be a triangle.
        	By~\eqref{equ:co-degree}, we can choose $w\in N^{\ast}_{V_1}(u_2v_2)\setminus \{u_1v_1\}$.
        	Clearly, $u_1v_1, v_2w\in N_{V'}(u_2)$ will contradictis Claim~\ref{CLAIM:GenTuran-cycle-P_2}.
        \end{proof}
        %%%%%%%%%%%%%%%%%%%%%%%%%%%%%%%%%%%%%%%%
        \iffalse
        \begin{claim}\label{CLAIM:GenTuran-cycle-2-intersecting}
        We have $N(K_3,{B}[D]) \le n-3$.
        %In particular, $N(K_3,{B}[D]) \le (1/2 + \sqrt{2\delta}n$, if ${\cal B}[V_1]=\emptyset$ or ${\cal B}[V_2]=\emptyset$.
        \end{claim}
        \begin{proof}
        Let ${\cal B}[D]$ be the triangles in ${\cal B}$ such that each triangle is composed by vertices in $D$.
        First, observe from Claim~\ref{CLAIM:GenTuran-cycle-P_2} that ${\cal B}[D]$ is composed by disjoint maximal $2$-intersectings and each triangle in ${\cal B}[D]$ is contained in exactly one $2$-intersecting.
        Suppose ${\cal B}[D]$ contains $s$ disjoint maximal $2$-intersectings, denoted by $H_1, H_2, \dots, H_s$, and let $e_i$ be the intersecting edge in $H_i$.
        Clearly, $N(K_3,{B}[D]) \le n-1-2i\le n-3$.
        \end{proof}
        \fi
        %
        \begin{claim}\label{CLAIM:GenTuran-even-cycle-B0}
            We have $|\mathcal{B}_0\setminus\mathcal{B}[D]|
            \le 11\sqrt{\delta} n |\overline{D'}| + |E(G[D_1'])| + |E(G[D_2'])|$.
        \end{claim}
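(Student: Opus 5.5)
We will identify exactly which triangles make up $\mathcal{B}_0\setminus\mathcal{B}[D]$ and then count them through the edges they use inside the parts. A triangle in $\mathcal{B}_0$ has no vertex in $\overline{D}$. Since $V(G)=\{x\}\cup D\cup\overline{D}$, such a triangle either lies entirely in $D$ or contains $x$. Removing $\mathcal{B}[D]$ therefore leaves only triangles of the form $xuv$ with $u,v\in D$.

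Such a triangle $xuv$ lies in $\mathcal{B}$, so it is not a triangle of $S=\{x\}\uproduct(V_1\uproduct V_2)$. Every pair $u\in V_1$, $v\in V_2$ together with $x$ forms a triangle of $S$, so $u$ and $v$ must lie in the same part. Since both are in $D$, the edge $uv$ belongs to $G[D_1]$ or to $G[D_2]$. Conversely, each such edge $uv$ determines at most one triangle of $\mathcal{B}_0\setminus\mathcal{B}[D]$, namely $xuv$. Hence
$$|\mathcal{B}_0\setminus\mathcal{B}[D]|\le |E(G[D_1])|+|E(G[D_2])|.$$

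Next we split the edges of $G[D_i]$ according to whether they meet $\overline{D'}_i=D_i\setminus D_i'$.

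\emph{Edges with both ends in $D_i'$.} These are exactly the edges of $G[D_i']$, so they contribute at most $|E(G[D_1'])|+|E(G[D_2'])|$ in total.

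\emph{Edges with at least one end in $\overline{D'}_i$.} Take $v\in\overline{D'}_i\subseteq V_i$. By Claim~\ref{CLAIM:GenTuran-bad-deg-max}, $v$ has at most $|N_G(v)\cap D_i|\le 11\sqrt{\delta}n$ neighbours in $D_i$. Charging each such edge to one of its endpoints in $\overline{D'}_i$, there are at most $11\sqrt{\delta}n\,|\overline{D'}_i|$ of them inside $D_i$. Summing over $i\in\{1,2\}$ gives at most $11\sqrt{\delta}n\,|\overline{D'}|$.

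Adding the two contributions gives the bound claimed in Claim~\ref{CLAIM:GenTuran-even-cycle-B0}. There is no real obstacle here. The only step needing care is the first one: checking that every triangle of $\mathcal{B}_0\setminus\mathcal{B}[D]$ contains $x$, and that its other two vertices lie in the same part $D_i$ and not across the bipartition. Once that is established, the count is immediate from Claim~\ref{CLAIM:GenTuran-bad-deg-max} and the definition of $D_i'$.
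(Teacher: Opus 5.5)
Your proof is correct and takes essentially the same approach as the paper. Both arguments observe that every triangle of $\mathcal{B}_0\setminus\mathcal{B}[D]$ has the form $xuv$ with $uv$ an edge inside some $D_i$, bound those meeting $\overline{D'}$ via Claim~\ref{CLAIM:GenTuran-bad-deg-max}, and charge the rest to edges of $G[D_1']$ and $G[D_2']$; your write-up is, if anything, more explicit than the paper's, which only says the remaining count ``depends on'' those edges.
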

        \begin{proof}
            It follows from the definition that every triangle in $\mathcal{B}_0\setminus\mathcal{B}[D]$ is determined by an edge in links $L_{D_i}(x)$ for some $i\in \{1,2\}$.
            By Claim~\ref{CLAIM:GenTuran-bad-deg-max}, the number of triangles in $\mathcal{B}_0\setminus\mathcal{B}[D]$ that have nonempty intersection with $\overline{D'}$ is at most $11\sqrt{\delta}n \left(|\overline{D'}_1| + |\overline{D'}_2|\right) = 11\sqrt{\delta} n |\overline{D'}|$.
            On the other hand, following from the definition of ${\cal B}$ and Claim~\ref{CLAIM:GenTuran-cycle-GD1'D2'}, the number of triangles in $\mathcal{B}_0\setminus\mathcal{B}[D]$ that have empty intersection with $\overline{D'}$ depends on the number of edges in $G[D_1']$ and $G[D_2']$.
        \end{proof}
    Recall the definition of ${\cal M}_2$.
    Since every vertex $v\in \overline{D'}_i$ has at most $\tau'$ neighbors in $D_{3-i}$ for $i=1,2$, each edge $vx$ contributes at least $\left(|D_{3-i}| -\tau'\right)$ triangles in $\mathcal{M}_2$.
    Combined with~\eqref{equ:GenTuran-Di-size}, we see that $vx$ contributes at least $\left({1}/{10} - 2\sqrt{\delta} \right)n$ triangles in $\mathcal{M}_2$ for each $v\in \overline{D'}$,
    and then
    \begin{equation}\label{equ:GenTuran-D''-M2}
    	|\mathcal{M}_2| \ge \left(\frac{1}{10} - 2\sqrt{\delta} \right)n|\overline{D'}|
    \end{equation}
Now, we count the triangles in ${\cal S}\cup {\cal B}\setminus {\cal M}$.

If either $\overline{D}\ne \emptyset$ or $\overline{D'}\ne \emptyset$, then it follows from%~\eqref{equ:GenTuran-B[D]},
~\eqref{equ:GenTuran-cycle-B3},~\eqref{equ:GenTuran-D''-M2} and Claims~\ref{CLAIM:GenTuran-cycle-D-bar},~\ref{CLAIM:GenTuran-cycle-B_2'},~\ref{CLAIM:GenTuran-cycle-B1},~\ref{CLAIM:GenTuran-even-cycle-B0} that
\begin{align*}
    N(K_3,G)
    \le &\ |\mathcal{S}| + |\mathcal{B}_3| + |\mathcal{B}_2|
    + |\mathcal{B}_1| + |\mathcal{B}[D]| +  |\mathcal{B}_0\setminus\mathcal{B}[D]| - |\mathcal{M}_1| - |\mathcal{M}_2| \\
    \le &\ N(K_3,S(n)) + C |\overline{D}|^2
    + \frac{n |\overline{D}|}{3} + \left(11\sqrt{\delta} n |\overline{D}| + n-1\right) + \left(11\sqrt{\delta} n |\overline{D'}| + 2 \right) \\
    &\ - \frac{49}{100}  n|\overline{D}|
    - \left(\frac{1}{10} - 2\sqrt{\delta} \right)n|\overline{D'}|
    \\ %%%%%%%%%%
    \le &\ N(K_3,S(n)) + (n + 1) - \left(\frac{47}{300} - 11\sqrt{\delta} - 8\delta C \right)
    n|\overline{D}| - \left(\frac{1}{10} - 13\sqrt{\delta} \right)n|\overline{D'}| \\
    < &\ N(K_3,S(n)) + (n-7) = N(K_3,S^=(n)),
\end{align*}
as $n$ is sufficiently large and $\delta$ is sufficiently small,
a contradiction.

Therefore, we suppose $\overline{D} = \overline{D'} = \emptyset$.
Then $D_1'=D_1$, $D_2'=D_2$, $D=V'$ and ${\cal B}_1 = {\cal B}_2 = {\cal B}_3 = {\cal M}_1 = \emptyset$.
By~\eqref{equ:GenTuran-Di-triangle-free}, we see that $G[D_1]$ and $G[D_2]$ contain no triangle, implying that the triangles in ${\cal B}[D]$ are composed by a fixed edge in $G[D_1]$ and a vertex in $D_{2}\cup \{x\}$ or a fixed edge in $G[D_2]$ and a vertex in $D_{1}\cup \{x\}$.
From Claim~\ref{CLAIM:GenTuran-cycle-GD1'D2'}, it remains to consider the following conditions:

If both $G[D_1]$ and $G[D_2]$ contain no edge, then it is not hard to verify that the graph $G$, which contains maximum number of triangles, is exactly $S(n)$, contradicting~\eqref{equ:sequence S}.

If $|E(G[D_1])| + |E(G[D_2])| = 1$, then $G$ must be $S^+(n)$, contradicting~\eqref{equ:sequence S}.

If $|E(G[D_1])| = 1$ and $|E(G[D_2])| = 1$, then $G$ is exactly $S^=(n)$ by Claims~\ref{GenTuran-no-D1,D2-edge}.
\qed
%\cite{MR4299397}

\section*{Declarations}
 \subsection*{Funding}
 
This work is  supported by Science and Technology Commission of Shanghai Municipality (No. 22DZ2229014) and National Natural Science Foundation of China (No. 12331014, No. 12501481, No. 12271169).

 %\subsection*{Author Contributions} 
 
% \textbf{Jialei Song:} Conceptualization, Methodology, Formal analysis, Writing-original draft.  \textbf{Qi Wu:} Validation, Writing-review \& editing. \textbf{Long-Tu Yuan:} Conceptualization, Supervision, Funding acquisition, Writing-review \& editing.
 
 \subsection*{Data Availability Statement} 
 Data sharing is not applicable to this article as no new data were created or analyzed in this study.

 \subsection*{Conflict of Interest/Competing Interests} 
 The authors have no relevant financial or non-financial interests to disclose.

\end{document}